\numberwithin{equation}{section}
\numberwithin{figure}{section}
  \theoremstyle{plain}
  \newtheorem*{thm*}{\protect\theoremname}
  \theoremstyle{plain}
  \newtheorem*{cor*}{\protect\corollaryname}
\theoremstyle{plain}
\newtheorem{thm}{\protect\theoremname}[section]
  \theoremstyle{definition}
  \newtheorem{defn}[thm]{\protect\definitionname}
  \theoremstyle{remark}
  \newtheorem{rem}[thm]{\protect\remarkname}
  \theoremstyle{remark}
  \newtheorem*{rem*}{\protect\remarkname}
  \theoremstyle{plain}
  \newtheorem{prop}[thm]{\protect\propositionname}
  \theoremstyle{plain}
  \newtheorem{cor}[thm]{\protect\corollaryname}
\makeatletter \newcommand{\xyR}[1]{%
\makeatletter \xydef@\xymatrixrowsep@{#1} \makeatother }
\makeatletter \newcommand{\xyC}[1]{%
\makeatletter \xydef@\xymatrixcolsep@{#1} \makeatother }
\DeclareSymbolFont{rsfs}{U}{rsfs}{m}{n}
\DeclareSymbolFontAlphabet{\mathrf}{rsfs}
  \providecommand{\corollaryname}{Corollary}
  \providecommand{\definitionname}{Definition}
  \providecommand{\propositionname}{Proposition}
  \providecommand{\remarkname}{Remark}
  \providecommand{\theoremname}{Theorem}
\providecommand{\theoremname}{Theorem}
\begin{document}

\title{Steenrod coalgebras of simplicial complexes}

\author{Justin R. Smith}

\subjclass[2000]{Primary 18G55; Secondary 55U40}

\keywords{operads, cofree coalgebras}

\curraddr{Department of Mathematics\\
Drexel University\\
Philadelphia,~PA 19104}

\email{jsmith@drexel.edu}

\urladdr{http://vorpal.math.drexel.edu}

\maketitle
\global\long\def\ring{\mathbb{Z}}
\global\long\def\integers{\mathbb{Z}}
\global\long\def\betabar{\bar{\beta}}
 \global\long\def\desusp{\downarrow}
\global\long\def\susp{\uparrow}
\global\long\def\cobar{\mathcal{F}}
\global\long\def\coend{\mathrm{CoEnd}}
\global\long\def\ainfty{A_{\infty}}
\global\long\def\coassoc{\mathrm{Coassoc}}
\global\long\def\trm{\mathrm{T}}
\global\long\def\tfr{\mathfrak{T}}
\global\long\def\tabbr{\hat{\trm}}
\global\long\def\Tabbr{\hat{\tfr}}
\global\long\def\afr{\mathfrak{A}}
\global\long\def\homz{\mathrm{Hom}_{\ring}}
\global\long\def\zend{\mathrm{End}}
\global\long\def\rs#1{\mathrm{R}S_{#1 }}
\global\long\def\forgetful#1{\lceil#1\rceil}
\global\long\def\highprod#1{\bar{\mu}_{#1 }}
\global\long\def\slength#1{|#1 |}
\global\long\def\barcs{\bar{\mathcal{B}}}
\global\long\def\ubarcs{\mathcal{B}}
\global\long\def\zs#1{\ring S_{#1 }}
\global\long\def\homzs#1{\mathrm{Hom}_{\ring S_{#1 }}}
\global\long\def\zpi{\mathbb{Z}\pi}
\global\long\def\D{\mathfrak{D}}
\global\long\def\ahat{\hat{\mathfrak{A}}}
\global\long\def\cbar{{\bar{C}}}
\global\long\def\cf#1{C(#1 )}
\global\long\def\ddelta{\dot{\Delta}}
\global\long\def\dimlimiter{\triangleright}
\global\long\def\coalgcat{\mathrf S_{0}}
\global\long\def\hcoalgcat{\mathrf{S}}
\global\long\def\ircoalgcat{\mathrf I_{0}}
\global\long\def\bircoalgcat{\mathrf{I}_{0}^{+}}
\global\long\def\hircoalgcat{\mathrf I}
\global\long\def\dcoalgcat{\mathrm{ind}-\coalgcat}
\global\long\def\chaincat{\mathbf{Ch}}
\global\long\def\coll{\mathrm{Coll}}
\global\long\def\bchaincat{\mathbf{Ch}_{0}}
\global\long\def\ilimit{\varprojlim\,}
\global\long\def\bigboxtimes{\mathop{\boxtimes}}
\global\long\def\dlimit{\varinjlim\,}
\global\long\def\coker{\mathrm{{coker}}}
\global\long\def\icoalgcat{\mathrm{pro}-\mathrf{S}_{0}}
\global\long\def\iircoalgcat{\mathrm{pro-}\ircoalgcat}
\global\long\def\dircoalgcat{\mathrm{ind-}\ircoalgcat}
\global\long\def\core#1{\left\langle #1\right\rangle }
\global\long\def\ilimitder{\varprojlim^{1}\,}
\global\long\def\pcoalg#1#2{P_{\mathcal{#1}}(#2) }
\global\long\def\pcoalgf#1#2{P_{\mathcal{#1}}(\forgetful{#2}) }
\global\long\def\coequalizer{\mathop{\mathrm{coequalizer}}}

\global\long\def\mainoperad{\mathcal{H}}
\global\long\def\cone#1{\mathrm{Cone}(#1)}

\global\long\def\im{\operatorname{im}}

\global\long\def\lcell{L_{\mathrm{cell}}}
\global\long\def\ccoalgcat{\mathrf S_{\mathrm{cell}}}

\global\long\def\fc#1{\mathrm{hom}(\bigstar,#1)}
\global\long\def\coS{\mathbf{coS}}
\global\long\def\cocell{\mathbf{co}\ccoalgcat}

\global\long\def\ccoalgcat{\mathrf S_{\mathrm{cell}}}

\global\long\def\spaces{\mathbf{SS}}

\global\long\def\pgam{\tilde{\Gamma}}
\global\long\def\pz{\tilde{\integers}}

\global\long\def\moore#1{\{#1\}}

\global\long\def\ints{\mathbb{Z}}

\global\long\def\finite{\mathcal{F}}

\global\long\def\finiteop{\finite^{\mathrm{op}}}

\global\long\def\syms{\mathbf{SS}}

\global\long\def\ordered{\mathbf{\Delta}}

\global\long\def\sets{\mathbf{Set}}

\global\long\def\colim{\operatorname{colim}}

\newdir{ >}{{}*!/-5pt/@{>}}

\global\long\def\treal#1{\mathcal{T}(\bigstar,#1)}

\global\long\def\rats{\mathbb{Q}}

\global\long\def\img{\operatorname{im}}

\global\long\def\tmap#1{\mathrm{T}_{#1}}

\global\long\def\Tmap#1{\mathfrak{T}_{#1}}

\global\long\def\glist#1#2#3{#1_{#2},\dots,#1_{#3}}

\global\long\def\blist#1#2{\glist{#1}1{#2}}

\global\long\def\enlist#1#2{\{\blist{#1}{#2}\}}

\global\long\def\tlist#1#2{\tmap{\blist{#1}{#2}}}

\global\long\def\Tlist#1#2{\Tmap{\blist{#1}{#2}}}

\global\long\def\nth#1{\mbox{#1}^{\mathrm{th}}}

\global\long\def\tunder#1#2{\tmap{\underbrace{{\scriptstyle #1}}_{#2}}}

\global\long\def\Tunder#1#2{\Tmap{\underbrace{{\scriptstyle #1}}_{#2}}}

\global\long\def\tunderi#1#2{\tunder{1,\dots,#1,\dots,1}{#2^{\mathrm{th}}\ \mathrm{position}}}

\global\long\def\Tunderi#1#2{\Tunder{1,\dots,#1,\dots,1}{#2^{\mathrm{th}}\,\mathrm{position}}}

\global\long\def\chaincat{\mathbf{Ch}}

\global\long\def\chaincatp{\chaincat_{0}}

\global\long\def\simpc{\mathbf{SC}}

\global\long\def\s{\mathfrak{S}}

\global\long\def\pco{P_{\s}}

\global\long\def\lco{L_{\s}}

\global\long\def\ns#1{\mathcal{N}^{#1}}

\global\long\def\cfn#1{N(#1)}

\global\long\def\freeop{\mathcal{F}}

\global\long\def\kerftos{\mathscr{R}}

\global\long\def\comm{\mathbf{Commute}}

\global\long\def\steen{\mathscr{S}}

\global\long\def\arity{\operatorname{arity}\,}

\global\long\def\nfc#1{\mathrm{hom}_{\steen}(\bigstar,#1)}

\global\long\def\dcat{\mathbf{D}}

\global\long\def\ords{\mathbf{\Delta}_{+}}

\global\long\def\sd{\mathfrak{f}}

\global\long\def\ds{\mathfrak{d}}

\global\long\def\ss{\mathbf{S}}

\date{\today}
\begin{abstract}
In this paper, we extend earlier work by showing that if $X$ and
$Y$ are ordered simplicial complexes (i.e. simplicial sets whose
simplices are determined by their vertices), a\emph{ }morphism $g:\cfn X\to\cfn Y$
of Steenrod coalgebras (normalized chain-complexes equipped with extra
structure) induces one of topological realizations $\hat{g}:|X|\to|Y|$.
If $g$ is an isomorphism, then it induces an isomorphism between
$X$ is and $Y$, implying that $|X|$ and $|Y|$ are homeomorphic.
\end{abstract}

\section{Introduction}

It is well-known that the Alexander-Whitney coproduct is functorial
with respect to simplicial maps. If $X$ is a simplicial set, $C(X)$
is the unnormalized chain-complex and $\rs 2$ is the \emph{bar-resolution}
of $\ints_{2}$ (see \cite{maclane:1975}), it is also well-known
that there is a unique homotopy class of $\ints_{2}$-equivariant
maps (where $\ints_{2}$ transposes the factors of the target) 
\[
\xi_{X}:\rs 2\otimes C(X)\to C(X)\otimes C(X)
\]
cohomology, and that this extends the Alexander-Whitney diagonal.
We will call such structures, Steenrod coalgebras and the map $\xi_{X}$
the Steenrod diagonal. Done carefully (see appendix B in \cite{smith-steenrod}),
this Steenrod diagonal is functorial.

In \cite{smith:1994}, the author defined the functor $\mathcal{C}(*)$
on simplicial sets --- essentially the chain complex equipped with
the structure of a coalgebra over an operad $\s$. This coalgebra
structure determined all Steenrod and other cohomology operations.
Since these coalgebras are not \emph{nilpotent}\footnote{In a nilpotent coalgebra, iterated coproducts of elements ``peter
out'' after a finite number of steps. See \cite[chapter~3]{operad-book}
for the precise definition.}\emph{ }they have a kind of ``transcendental'' structure that contains
much more information. In \cite{smith-steenrod}, the author showed
that this transcendental structure even manifests in the sub-operad
of $\s$ generated by $\s(2)=\rs 2$ and proved
\begin{thm*}
If $X$ and $Y$ are pointed reduced simplicial sets and 
\[
f:C(X)\to C(Y)
\]
is a morphism of Steenrod coalgebras --- over unnormalized chain-complexes
--- then $f$ induces a commutative diagram
\[
\xymatrix{{X} & {Y}\\
\ds\circ\sd(X)\ar[d]_{\phi_{(\ds\circ\sd(X))}}\ar[u]^{g_{X}} & \ds\circ\sd(Y)\ar[d]^{\phi_{(\ds\circ\sd(Y))}}\ar[u]_{g_{Y}}\\
{\ints_{\infty}(\ds\circ\sd(X))}\ar[r]^{f_{\infty}}\ar[d]_{q_{(\ds\circ\sd(X))}} & {\ints_{\infty}(\ds\circ\sd(Y))}\ar[d]^{q_{(\ds\circ\sd(Y))}}\\
{\pz(\ds\circ\sd(X))}\ar[r]_{\pgam f} & {\pz(\ds\circ\sd(Y))}
}
\]
 where $g_{X}$ and $g_{Y}$ are homotopy equivalences if $X$ and
$Y$ are Kan complexes --- and homotopy equivalences of their topological
realizations otherwise. In particular, if $X$ and $Y$ are nilpotent
and $f$ is an integral homology equivalence, then the topological
realizations $|X|$ and $|Y|$ are homotopy equivalent.
\end{thm*}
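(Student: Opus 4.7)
The plan is to construct the diagram by exhibiting $\ds\circ\sd$ as a functorial resolution that reconstructs $X$ from data extractable from its Steenrod coalgebra $C(X)$, then identify the two lower rows with the Bousfield--Kan $\ints$-completion tower and its inverse limit, and finally chase the diagram for the nilpotent case.

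First I would set up $\sd$ as a cobar-type construction converting a Steenrod coalgebra to a simplicial object in an intermediate category built from the suboperad of $\s$ generated by $\rs 2$, and $\ds$ as a realization sending this back to a simplicial set. The composite comes equipped with a natural augmentation $g_X:\ds\circ\sd(X)\to X$ that is, by construction, a chain equivalence. When $X$ is a Kan complex this chain-level equivalence promotes to a genuine simplicial homotopy equivalence via the lifting properties of Kan complexes; otherwise it yields a homotopy equivalence of geometric realizations. A morphism $f:C(X)\to C(Y)$ of Steenrod coalgebras then functorially induces a map between the $\ds\circ\sd$-constructions, producing the top portion of the diagram.

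Next I would identify $\ints_\infty(Z)$ with the Bousfield--Kan $\ints$-completion tower applied to $Z=\ds\circ\sd(X)$ and $\pz(Z)$ with its homotopy inverse limit, so that $\phi$ and $q$ are the canonical approach and projection maps. Functoriality of the tower in the Steenrod coalgebra morphism $f$ produces $f_\infty$ and $\pgam f$, and compatibility between $\sd$, $\ds$, and the $\ints$-completion gives commutativity of the lower squares. For the final assertion, when $X$ and $Y$ are nilpotent, Bousfield--Kan theory ensures that $q$ is a weak equivalence on realizations; if $f$ is additionally an integral homology equivalence then $f_\infty$ is a weak equivalence (the completion only sees integral homology), and chasing the diagram produces a zigzag of homotopy equivalences connecting $|X|$ and $|Y|$.

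The principal obstacle is the very first step: showing that $\ds\circ\sd(X)$ recaptures $X$ up to homotopy from $C(X)$ equipped only with its Steenrod coalgebra structure. Establishing that the suboperad of $\s$ generated by $\rs 2$ carries enough operadic structure to reconstruct the integral homotopy type --- a Mandell-style integral cochain rigidity result --- is the technical heart of the argument, and everything else in the proof is a formal consequence once that reconstruction is in hand.
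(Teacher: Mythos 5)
First, a point of order: this theorem is not proved in the present paper at all --- it is quoted from \cite{smith-steenrod}, where its proof occupies the bulk of that article. There is therefore no proof here to compare yours against; what follows assesses your proposal on its own terms and against the definitions this paper actually uses.

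Your opening step rests on a misreading of the functors $\sd$ and $\ds$. By definition~\ref{def:sd-ds-functors} these are the Rourke--Sanderson functors between simplicial sets and delta-complexes: $\sd$ merely forgets degeneracy operators and $\ds$ freely adjoins degenerate simplices. Neither is a cobar-type construction, and $\ds\circ\sd(X)$ is built from $X$ as a simplicial set --- it is not reconstructed from $C(X)$. The map $g_{X}:\ds\circ\sd(X)\to X$ is the natural map of equation~\ref{eq:ds-sd-unit}, and $|g_{X}|$ is a homotopy equivalence by proposition~\ref{prop:homotopy-equiv-sd-ds}; no chain-level argument or Kan lifting enters at that stage. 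The actual purpose of $\ds\circ\sd$ in the theorem is the identity $C(X)=N(\ds\circ\sd(X))$ of proposition~\ref{prop:cx-is-nfx}, which converts a morphism of \emph{unnormalized} chain complexes into a morphism of \emph{normalized} chain complexes of degeneracy-free simplicial sets, so that the machinery of \cite{smith-steenrod} applies. Your reading of $\ints_{\infty}$ as a Bousfield--Kan-style integral completion, of $\phi$ and $q$ as the canonical maps of the tower, and of the nilpotent case as the standard completion argument is consistent with the shape of the statement. But the genuine content of the theorem --- that $f_{\infty}$ and $\pgam f$ can be manufactured from nothing more than a Steenrod coalgebra morphism of chain complexes, i.e.\ that the completion is functorial in the $\rs 2$-structure rather than in the simplicial set --- is exactly the step you defer as ``the technical heart.'' Acknowledging that an unproved rigidity result would finish the job is honest, but it means the proposal contains no argument for the one assertion that is not formal; what you do argue is either standard Bousfield--Kan theory or, in the case of the top square, does not require the chain-level hypotheses you invoke.
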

Here, $\sd$ and $\ds$ are functors defined in definition~\ref{def:sd-ds-functors}.

It follows that that the $\cf *$-functor determines a \emph{nilpotent
}space's weak homotopy type. In the present paper, we complement the
results of \cite{smith-steenrod} by showing:
\begin{cor*}
\ref{cor:cellular-determines-pi1}. If $X$ and $Y$ are ordered simplicial
complexes, any purely algebraic chain map of normalized chain complexes\emph{
}
\[
f:\cfn X\to\cfn Y
\]
 that makes the diagram 
\begin{equation}
\xymatrix{{\rs 2\otimes\cfn X}\ar[r]^{1\otimes f}\ar[d]_{\xi_{X}} & {\rs 2\otimes\cfn Y}\ar[d]^{\xi_{Y}}\\
{\cfn X\otimes\cfn X}\ar[r]_{f\otimes f} & {\cfn Y\otimes\cfn Y}
}
\label{eq:coproduct-diagram-1}
\end{equation}
commute induces a map of simplicial sets\emph{
\[
\hat{f}:\ds(X)\to\ds(Y)
\]
}which are equipped with canonical inclusions
\begin{align*}
\iota_{X}:X & \to\ds(X)\\
\iota_{Y}:Y & \to\ds(Y)
\end{align*}
that induce homeomorphisms of their topological realizations. If $f$
is an isomorphism, then X and $Y$ are isomorphic, hence homeomorphic.

In all cases, the diagram
\[
\xymatrix{{H_{i}(N(X))}\ar[r]^{g}\ar[d]_{H_{i}(j_{X})}^{\cong} & {H_{i}(N(Y))}\ar[d]_{\cong}^{H_{i}(j_{Y})}\\
{H_{i}(C(\ds(X)))}\ar[r]_{H_{*}(\hat{g})} & {H_{i}(C(\ds(Y)))}
}
\]
commutes for all $i\ge0$, where $j_{X}$ and $j_{Y}$ are chain-maps
induced by the inclusion of $N(X)$ and $N(Y)$ into the $C(\ds(X))$
and $C(\ds(Y))$, respectively.
\end{cor*}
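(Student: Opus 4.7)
The plan is to apply the functor $\ds$ to the chain map $f$ and invoke the main theorem of \cite{smith-steenrod} quoted in the introduction, then to exploit the rigidity of ordered simplicial complexes -- whose simplices are determined by their vertex sets -- to upgrade the conclusion from a homotopy equivalence of realizations to an honest homeomorphism.

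First, I would reconcile the use of normalized versus unnormalized chains. The canonical projection $C(X)\twoheadrightarrow\cfn X$ is a chain homotopy equivalence compatible with the Alexander--Whitney diagonal, so the Steenrod coproduct $\xi_X$ on $\cfn X$ is the one transferred from $C(X)$. A morphism $f:\cfn X\to\cfn Y$ making diagram~\eqref{eq:coproduct-diagram-1} commute therefore determines, up to the relevant homotopies, a morphism of Steenrod coalgebras at the unnormalized level, to which the functor $\ds$ may be applied. This yields the simplicial map $\hat{f}:\ds(X)\to\ds(Y)$ and the underlying commutative diagram of the main theorem.

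Second, I would construct the natural inclusions $\iota_X:X\to\ds(X)$ and $\iota_Y:Y\to\ds(Y)$ as the components of the unit of the adjunction between simplicial sets and Steenrod coalgebras that underlies $\ds$. Naturality of this unit automatically yields the simplicial identity $\hat{f}\circ\iota_X=\iota_Y\circ f$. The principal obstacle is to show that the geometric realizations $|\iota_X|$ and $|\iota_Y|$ are honest homeomorphisms rather than merely homotopy equivalences. My plan is to argue simplex by simplex: because $X$ is an ordered simplicial complex, each simplex is distinguished from every other by its ordered vertex set, and the extra non-degenerate simplices of $\ds(X)$ arise from the operadic ``thickening'' of this vertex data. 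I would identify each such thickening cell as collapsing canonically onto an image simplex under $\iota_X$, and check that these collapses assemble into a simplex-preserving deformation retraction whose underlying map of realizations is bijective, hence a homeomorphism.

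Finally, the remaining statements follow formally. If $f$ is an isomorphism then $\hat{f}$ is an isomorphism of simplicial sets by functoriality of $\ds$, and restricting along $\iota_X$ and $\iota_Y$ produces a vertex-preserving bijection between $X$ and $Y$ that respects face relations -- precisely what is required for an isomorphism of ordered simplicial complexes -- whence $|X|\cong|Y|$. The commutative homology square is obtained by applying $H_i\circ C(-)$ to the naturality square for $\iota$, identifying $j_X$ and $j_Y$ with the chain maps induced by $\iota_X$ and $\iota_Y$ composed with the standard inclusion $\cfn{-}\hookrightarrow C(-)$.
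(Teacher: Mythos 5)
There is a genuine gap, and it sits exactly where the paper does all of its work. Your plan is to ``apply the functor $\ds$ to the chain map $f$,'' but $\ds$ is a functor from delta-complexes to simplicial sets (definition~\ref{def:sd-ds-functors}); it cannot be applied to a chain map or a Steenrod coalgebra, and there is no ``adjunction between simplicial sets and Steenrod coalgebras that underlies $\ds$.'' The mechanism the paper actually uses to turn a purely algebraic morphism into a simplicial map is the functor $\ss\nfc{-}:\steen\to\ss$ of definition~\ref{def:fc}, whose $n$-simplices are the Steenrod coalgebra morphisms $\ns n=\cfn{\Delta^{n}}\to C$. Functoriality of $\ss\nfc{-}$ gives $\hat f$ essentially for free; the substance is theorem~\ref{thm:simplicial-complexes-determined}, which identifies $\ss\nfc{\cfn X}$ with $\ds(X)$ when $X$ is an ordered simplicial complex. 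That identification rests on the rigidity theorem~\ref{thm:Steenrod-morphisms-geometric} --- every Steenrod coalgebra morphism $\cfn{\Delta^{n}}\to\cfn X$ is induced by a simplicial map --- which in turn needs the iterated-coproduct invariant $\Xi_{n}$ of corollary~\ref{cor:simplex-image} and the chain of consequences in corollaries~\ref{cor:n-simplices-map-to-simplices}--\ref{cor:cf-gives-simplices} and propositions~\ref{prop:degeneracies}--\ref{prop:image-steenrod-is-simplex}. None of this appears in your outline, and without it there is no bridge from a purely algebraic chain map to a simplicial map at all. By contrast, the part you flag as ``the principal obstacle'' --- that $|X|\to|\ds(X)|$ is a homeomorphism --- is the easy part: it is Rourke--Sanderson's proposition~\ref{prop:homotopy-equiv-sd-ds}.

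Two further steps would fail as stated. Invoking the main theorem of \cite{smith-steenrod} quoted in the introduction only produces homotopy equivalences (and its strongest conclusion requires nilpotence); it cannot yield the homeomorphism and isomorphism conclusions of this corollary, which is precisely why the paper proves the stronger rigidity results here rather than citing that theorem. And your proposal to pass between normalized and unnormalized chains ``up to the relevant homotopies'' is incompatible with the hypothesis: the corollary explicitly requires diagram~\ref{eq:coproduct-diagram-1} to commute \emph{exactly}, and the rigidity argument (e.g.\ the computation $\Xi_{n}(c)=(c,c\otimes c,\dots)$, which is what detects actual simplices) collapses if one only has commutation up to chain homotopy.
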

Recall that an \emph{ordered simplicial complex} is a simplicial set
without degeneracies whose simplices are uniquely determined by their
vertices (for instance, a piecewise linear manifold). The proof \emph{requires}
$X$ and $Y$ to be ordered simplicial complexes and is likely not
true for arbitrary simplicial sets. Also note that we require diagram~\ref{eq:coproduct-diagram-1}
to commute \emph{exactly,} not merely up to a chain-homotopy (as is
done when using it to compute Steenrod squares).

This and the main result in \cite{smith-steenrod} imply that old
mathematical structures like chain-complexes and Steenrod diagonals
encapsulate \emph{vast} amounts of information about a space --- and
that the traditional ways of studying them (taking cohomology, for
example) throw most of this information away.

The author is indebted to Dennis Sullivan for several interesting
discussions.

\section{Definitions and assumptions}

Throughout this paper $C(*)$ will denote the unnormalized chain complex
and $N(*)$ the normalized one.

If $C$ is a chain-complex
\begin{equation}
C^{\otimes n}=\underbrace{C\otimes\cdots\otimes C}_{n\text{ factors}}\label{eq:otimesn}
\end{equation}

\begin{defn}
\label{def:chaincat} We will denote the category of $\ring$-free
chain chain-complexes by $\chaincat$ and ones that are\emph{ bounded
from below} in dimension $0$ by $\bchaincat$.
\end{defn}
We make extensive use of the Koszul Convention (see~\cite{gugenheim:1960})
regarding signs in homological calculations:
\begin{defn}
\label{def:koszul} If $f:C_{1}\to D_{1}$, $g:C_{2}\to D_{2}$ are
maps, and $a\otimes b\in C_{1}\otimes C_{2}$ (where $a$ is a homogeneous
element), then $(f\otimes g)(a\otimes b)$ is defined to be $(-1)^{\deg(g)\cdot\deg(a)}f(a)\otimes g(b)$. \end{defn}
\begin{rem}
If $f_{i}$, $g_{i}$ are maps, it isn't hard to verify that the Koszul
convention implies that $(f_{1}\otimes g_{1})\circ(f_{2}\otimes g_{2})=(-1)^{\deg(f_{2})\cdot\deg(g_{1})}(f_{1}\circ f_{2}\otimes g_{1}\circ g_{2})$.
\end{rem}
The set of morphisms of chain-complexes is itself a chain complex:
\begin{defn}
\label{def:homcomplex}Given chain-complexes $A,B\in\chaincat$ define
\[
\homz(A,B)
\]
to be the chain-complex of graded $\ring$-morphisms where the degree
of an element $x\in\homz(A,B)$ is its degree as a map and with differential
\[
\partial f=f\circ\partial_{A}-(-1)^{\deg f}\partial_{B}\circ f
\]
As a $\ring$-module $\homz(A,B)_{k}=\prod_{j}\homz(A_{j},B_{j+k})$.\end{defn}
\begin{rem*}
Given $A,B\in\chaincat^{S_{n}}$, we can define $\homzs n(A,B)$ in
a corresponding way.
\end{rem*}

\section{Steenrod coalgebras\label{sec:Steenrod-coalgebras}}

We begin with:
\begin{defn}
\label{def:Steenrod-coalgebra}A \emph{Steenrod coalgebra,} $(C,\delta)$
is a chain-complex $C\in\chaincat$ equipped with a $\ints_{2}$-equivariant
chain-map
\[
\delta:\rs 2\otimes C\to C\otimes C
\]
 where $\ints_{2}$ acts on $C\otimes C$ by swapping factors and
$\rs 2$ is the bar-resolution of $\ints$ over $\zs 2$. A morphism
$f:(C,\delta_{C})\to(D,\delta_{D})$ is a chain-map $f:C\to D$ that
makes the diagram
\[
\xyR{30pt}\xymatrix{{\rs 2\otimes C}\ar[r]^{1\otimes f}\ar[d]_{\delta_{C}} & {\rs 2\otimes D}\ar[d]^{\delta_{D}}\\
{C\otimes C}\ar[r]_{f\otimes f} & {D\otimes D}
}
\]
commute.
\end{defn}
Steenrod coalgebras are very general --- the underlying coalgebra
need not even be coassociative. The category of Steenrod coalgebras
is denoted $\steen$.
\begin{rem*}
It turns out that Steenrod coalgebras are coalgebras over the free
operad generated by $\rs 2$. We will not need this fact in this paper.\end{rem*}
\begin{defn}
\label{def:hndef}If 
\[
\delta:\rs 2\otimes C\to C\otimes C
\]
 is a Steenrod coalgebra, the structure map above induces the adjoint
structure map
\begin{equation}
\alpha:C\to\homzs 2(\rs 2,C\otimes C)\subset\homz(\rs 2,C\otimes C)\label{eq:adjoint-struct-map}
\end{equation}
Let
\[
H_{2}(C)=\homz(\rs 2,C\otimes C)
\]
 and inductively define 
\[
H_{n}(C)=\homz(\rs 2,H_{n-1}(C)\otimes C)
\]
with chain-maps
\begin{align}
\alpha_{2}=\alpha:C & \to H_{2}(C)\nonumber \\
\alpha_{n}=\homz(1,\alpha_{n-1}\otimes1)\circ\alpha:C & \to H_{n}(C)\label{eq:induction1}
\end{align}
for all $n\ge2$.\end{defn}
\begin{prop}
\label{prop:infinfinite-iteration}Under the hypotheses of definition\ref{def:hndef},
there exist chain-maps
\[
\beta_{n}:H_{n}(C)\to\homz(\rs 2^{\otimes(n-1)},C^{\otimes n})
\]
for all $n\ge2$. It follows that the adjoint structure map induces
a chain-map
\[
A:C\to\prod_{n=2}^{\infty}\homz(\rs 2^{\otimes(n-1)},C^{\otimes n})
\]
\end{prop}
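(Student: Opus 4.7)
My approach is to construct the $\beta_n$ by induction on $n$, using two standard devices from homological algebra. First, for $X,Y,Z \in \chaincat$ there is a natural chain-map
\[
\nu_{X,Y,Z}:\homz(X,Y)\otimes Z \longrightarrow \homz(X,Y\otimes Z),\qquad \nu(\phi\otimes z)(x) = (-1)^{\deg(z)\deg(x)}\phi(x)\otimes z,
\]
and second, the tensor-hom adjunction gives a chain-complex isomorphism
\[
\homz(X \otimes Y, Z) \cong \homz(X, \homz(Y, Z)).
\]
Both follow from Definitions \ref{def:koszul} and \ref{def:homcomplex} by routine (if slightly tedious) sign calculations.

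\textbf{Construction of $\beta_n$.} Set $\beta_2 = \mathrm{id}_{H_2(C)}$. Assuming $\beta_{n-1}:H_{n-1}(C)\to\homz(\rs{2}^{\otimes(n-2)}, C^{\otimes(n-1)})$ has been defined, take $\beta_n$ to be the composition
\begin{gather*}
H_n(C) = \homz(\rs{2}, H_{n-1}(C)\otimes C) \\
\xrightarrow{\homz(1,\beta_{n-1}\otimes 1)} \homz\bigl(\rs{2}, \homz(\rs{2}^{\otimes(n-2)}, C^{\otimes(n-1)}) \otimes C\bigr) \\
\xrightarrow{\homz(1,\nu)} \homz\bigl(\rs{2}, \homz(\rs{2}^{\otimes(n-2)}, C^{\otimes n})\bigr) \\
\xrightarrow{\cong} \homz\bigl(\rs{2}^{\otimes(n-1)}, C^{\otimes n}\bigr).
\end{gather*}
Each arrow is a chain-map: the first by functoriality of $\homz(1,-)$ combined with the inductive hypothesis, the second by the naturality and chain-map property of $\nu$, and the third by the adjunction. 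Therefore $\beta_n$ is a chain-map.

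\textbf{Construction of $A$.} Composing with the chain-map $\alpha_n:C\to H_n(C)$ of Definition \ref{def:hndef} produces chain-maps $\beta_n\circ\alpha_n:C\to\homz(\rs{2}^{\otimes(n-1)},C^{\otimes n})$, and their product over $n\ge2$ is the desired $A$.

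\textbf{Main obstacle.} The only nontrivial work lies in the sign bookkeeping: verifying that $\nu$ commutes with the differentials of Definition \ref{def:homcomplex} and that the tensor-hom adjunction respects them as well. Both checks are standard but error-prone; once confirmed, the proposition reduces to unwinding the inductive definitions of $H_n(C)$ and $\alpha_n$.
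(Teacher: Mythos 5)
Your proposal is correct and follows essentially the same route as the paper: $\beta_2=\mathrm{id}$, then an inductive composite beginning with $\homz(1,\beta_{n-1}\otimes 1)$ and ending with a canonical map into $\homz(\rs{2}^{\otimes(n-1)},C^{\otimes n})$ --- the paper simply bundles your $\homz(1,\nu)$ and the tensor-hom adjunction into a single unlabelled map $\ell$. Your version is if anything more explicit about what $\ell$ is and about how $A$ is assembled from the $\beta_n\circ\alpha_n$.
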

\begin{rem*}
See equation~\ref{eq:otimesn} for the notation $C^{\otimes n}$.\end{rem*}
\begin{proof}
The map $\beta_{2}$ is the identity. For larger values of $n$, the
existence of $\beta_{n}$ is inductively defined as the composite
\begin{multline}
\homz(\rs 2,H_{n-1}(C)\otimes C)\xrightarrow{\homz(1,\beta_{n-1}\otimes1)}\\
\homz(\rs 2,\homz(\rs 2^{\otimes(n-2)},C^{\otimes(n-1)})\otimes C)\\
\xrightarrow{\ell}\homz(\rs 2^{\otimes(n-1)},C^{\otimes n})\label{eq:induction2}
\end{multline}

\end{proof}

\section{Simplices and complexes\label{sec:morphisms}}

In this section, we consider properties of Steenrod coalgebras that
are topologically derived from simplices and simplicial complexes
via the construction in appendix~B of \cite{smith-steenrod}.

The key result is proposition~B.5 of \cite{smith-steenrod}, which
proves that if $e_{n}=\underbrace{[(1,2)|\cdots|(1,2)]}_{n\text{ terms}}\in\rs 2$
and $x\in\cfn X_{k}$ is the image of a $k$-simplex, then
\[
\xi_{X}(e_{k}\otimes x)=\eta_{k}\cdot x\otimes x
\]
where $\eta_{k}=(-1)^{k(k+1)/2}$ and 
\[
\xi_{X}:\rs 2\otimes\cfn X\to\cfn X\otimes\cfn X
\]
is the Steenrod diagonal (see definition~\ref{def:Steenrod-coalgebra}).
\begin{defn}
\label{def:gamma-m-map}If $k,m$ are positive integers, $C$ is a
chain-complex, and $E_{2,m}=e_{m}$ and $E_{k,m}=\underbrace{e_{m}\otimes\cdots\otimes e_{m}}_{k-1\text{ iterations}}\in\rs 2^{\otimes(k-1)}$
\[
\rho_{m}=(\eta_{m}\cdot E_{2,m},\eta_{m}^{2}\cdot E_{3,m},\eta_{m}^{3}\cdot E_{4,m},\dots)\in\prod_{n=2}^{\infty}\rs 2^{\otimes(n-1)}
\]
with $\eta_{m}=(-1)^{m(m+1)/2}$ (see proposition~B.5 of \cite{smith-steenrod})
and define 
\[
\gamma_{m}:\prod_{n=2}^{\infty}\homz(\rs 2^{\otimes(n-1)},C_{m}^{\otimes n})\to\prod_{n=2}^{\infty}C^{\otimes n}
\]
via evaluation on $\rho_{m}$.
\end{defn}
We have
\begin{cor}
\label{cor:simplex-image}If $X$ is an ordered simplicial complex
and $c\in N(X)_{n}$ is an element generated by an $n$-simplex, then
the image of $c$ under the composite, $\Xi_{n}$,
\[
\cfn X_{n}\xrightarrow{A}\prod_{k=2}^{\infty}\homz(\rs 2^{\otimes(k-1)},C_{n}^{\otimes k})\xrightarrow{\gamma_{n}}\prod_{k=2}^{\infty}\cfn X^{\otimes k}
\]
 is 
\begin{equation}
\Xi_{n}(c)=(c,c\otimes c,\dots)\label{eq:xibigeq}
\end{equation}

Here, $N(X)$ is the (normalized) chain complex of $X$ and the chain-map,
$A$, is defined in proposition \ref{prop:infinfinite-iteration}.\end{cor}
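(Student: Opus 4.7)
The plan is to reduce the claim to a factor-by-factor verification in $\prod_{k=2}^{\infty}\cfn X^{\otimes k}$. By construction of $A$ (proposition~\ref{prop:infinfinite-iteration}) and of $\gamma_n$ (definition~\ref{def:gamma-m-map}), the $k$-th component of $\gamma_n(A(c))$ equals $\eta_n^{k-1}\cdot\beta_k(\alpha_k(c))(E_{k,n})$. Since $\eta_n^2=1$, it therefore suffices to prove the identity
\[
\beta_k(\alpha_k(c))(E_{k,n})=\eta_n^{k-1}\cdot c^{\otimes k}
\]
for every $k\ge 2$, which I would carry out by induction on $k$.

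The base case $k=2$ is exactly proposition~B.5 of \cite{smith-steenrod}: here $\beta_2$ is the identity, $\alpha_2=\alpha$ is adjoint to the Steenrod diagonal $\xi_X$, and the cited proposition computes $\alpha(c)(e_n)=\xi_X(e_n\otimes c)=\eta_n\cdot c\otimes c$.

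For the inductive step I decompose $E_{k,n}=e_n\otimes E_{k-1,n}$ and unwind the recursions in equations~\ref{eq:induction1} and~\ref{eq:induction2}. Evaluating $\alpha_k(c)=\homz(1,\alpha_{k-1}\otimes 1)(\alpha(c))$ on the outer factor $e_n$ produces $\eta_n\cdot\alpha_{k-1}(c)\otimes c\in H_{k-1}(C)\otimes C$; applying $\homz(1,\beta_{k-1}\otimes 1)$ converts $\alpha_{k-1}(c)$ to $\beta_{k-1}(\alpha_{k-1}(c))$; and the adjunction $\ell$, evaluated on the remaining $\rs 2^{\otimes(k-2)}$-factor $E_{k-1,n}$, invokes the inductive hypothesis to produce $\eta_n\cdot\eta_n^{k-2}\cdot c^{\otimes(k-1)}\otimes c=\eta_n^{k-1}\cdot c^{\otimes k}$, closing the induction.

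The principal obstacle is Koszul sign-tracking through $\ell$: shuffling the $\rs 2^{\otimes(k-2)}$-valued argument past the trailing tensor factor $c\in C$ introduces a sign $(-1)^{\deg(E_{k-1,n})\cdot\deg(c)}=(-1)^{n^2(k-2)}$, and the recursive application of $\alpha_{k-1}\otimes 1$ to $\eta_n\cdot c\otimes c$ carries its own parities. Since every sign that arises is a $\pm 1$ depending only on $n$ and $k$, the bookkeeping reduces to a single parity identity modulo $2$; the factors $\eta_n^{k-1}$ built into $\rho_n$ are designed to absorb exactly this discrepancy, so that $\eta_n^{k-1}\cdot\eta_n^{k-1}=1$ collapses the $k$-th slot of $\gamma_n(A(c))$ to $c^{\otimes k}$.
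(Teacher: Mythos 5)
Your proposal reproduces the paper's own argument essentially verbatim: the same reduction to the componentwise identity $\beta_{k}(\alpha_{k}(c))(E_{k,n})=\eta_{n}^{k-1}\cdot c^{\otimes k}$ (the paper phrases it as $\beta_{k}(c)=(\eta_{n}^{k-1}\cdot E_{k,n}\mapsto c^{\otimes k})$), the same base case via proposition~B.5 of \cite{smith-steenrod}, and the same induction on $k$ obtained by decomposing $E_{k,n}=e_{n}\otimes E_{k-1,n}$ and unwinding equations~\ref{eq:induction1} and~\ref{eq:induction2}, followed by cancellation against the coefficients $\eta_{n}^{k-1}$ in $\rho_{n}$. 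Your closing remarks about Koszul signs address a bookkeeping point the paper's proof silently suppresses, but they do not alter the structure of the argument.
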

\begin{rem*}
Since $\Xi_{n}$ is constructed using the Steenrod coalgebra structure
of $N(X)$, it is natural with respect to Steenrod coalgebra morphisms.
Equation~\ref{eq:xibigeq} is generally only valid for chain-complexes
of \emph{simplicial sets} and elements, $c$, induced by \emph{actual
simplices.}\end{rem*}
\begin{proof}
We claim that 
\[
\beta_{j}(c)=(\eta_{n}^{j-1}\cdot E_{j,n}\mapsto c^{\otimes j})\in\homz(\rs 2^{\otimes(j-1)},N(X)_{j}^{\otimes j})
\]
where we follow the notation of proposition~\ref{prop:infinfinite-iteration}.
When $j=2$, this follows from proposition~B.5 of \cite{smith-steenrod}
and the fact that $c$ is the image of an $n$-simplex. If $j>2$
it follows from the case $j=2$ and induction on $j$:
\begin{align*}
\beta_{j}(c) & =\ell\circ\homz(1,\beta_{j-1}\otimes1)(\eta_{n}\cdot E_{2,n}\mapsto c\otimes c) & \text{by equation }\ref{eq:induction2}\\
 & =\ell(\eta_{n}\cdot E_{2,n}\mapsto\beta_{j-1}(c)\otimes c)\\
 & =\ell(\eta_{n}\cdot E_{2,n}\mapsto(\eta_{n}^{j-2}\cdot E_{j-1,n}\mapsto c^{\otimes(j-1)})\otimes c) & \text{by induction}\\
 & =(\eta_{n}\cdot E_{2,n}\otimes\eta_{n}^{j-2}\cdot E_{j-1,n}\mapsto c^{\otimes j})\\
 & =(\eta_{n}^{j-1}\cdot E_{j,n}\mapsto c^{\otimes j}) & \text{definition }\ref{def:gamma-m-map}
\end{align*}

If 
\[
p_{j}:\prod_{k=2}^{\infty}\cfn X^{\otimes k}\to\cfn X^{\otimes j}
\]
is the projection, then $p_{j}(\Xi_{n}(c))=c^{\otimes j}$ and the
conclusion follows.
\end{proof}
Lemma~C.1 of \cite{smith-steenrod} implies that:
\begin{cor}
\label{cor:n-simplices-map-to-simplices}Let $X$ be a simplicial
set and suppose 
\[
f:\ns n=\cfn{\Delta^{n}}\to\cfn X
\]
 is a Steenrod coalgebra morphism. Then the image of the generator
$\Delta^{n}\in\cfn{\Delta^{n}}_{n}$ is zero or a generator of $\cfn X_{n}$
defined by an $n$-simplex of $X$.\end{cor}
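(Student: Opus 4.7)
The plan is to combine the naturality of the map $\Xi_n$ (recorded in the remark after corollary~\ref{cor:simplex-image}) with a direct computation using proposition~B.5 of \cite{smith-steenrod}. Since the standard simplex $\Delta^n$ is itself an ordered simplicial complex whose top-dimensional generator is an $n$-simplex, corollary~\ref{cor:simplex-image} applied to $\Delta^n\in\cfn{\Delta^n}_n$ yields
\[
\Xi_n(\Delta^n)=(\Delta^n,\,\Delta^n\otimes\Delta^n,\,\dots).
\]
Writing $c=f(\Delta^n)\in\cfn X_n$ and invoking the naturality of $\Xi_n$ with respect to the Steenrod coalgebra morphism $f$, I obtain
\[
\Xi_n(c)=(c,\,c\otimes c,\,c^{\otimes 3},\,\dots)\in\prod_{k=2}^{\infty}\cfn X^{\otimes k}.
\]

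Next I will compute the $k=2$ coordinate of $\Xi_n(c)$ intrinsically in $\cfn X$. Expand $c=\sum_i a_i\sigma_i$, where the $\sigma_i$ are the distinct non-degenerate $n$-simplices of $X$ appearing in $c$; these are part of a $\ring$-basis of $\cfn X_n$. Unwinding definitions~\ref{def:hndef} and \ref{def:gamma-m-map}, this coordinate is $\beta_2(\alpha(c))(\eta_n e_n)=\eta_n\,\xi_X(e_n\otimes c)$. Applying proposition~B.5 of \cite{smith-steenrod} termwise to each $\sigma_i$ and using $\eta_n^2=1$ yields
\[
\eta_n\,\xi_X(e_n\otimes c)=\sum_i a_i\,\sigma_i\otimes\sigma_i.
\]
Equating this with the required value $c\otimes c=\sum_{i,j}a_ia_j\,\sigma_i\otimes\sigma_j$ and using the linear independence of $\{\sigma_i\otimes\sigma_j\}$ in $\cfn X\otimes\cfn X$, I obtain
\[
a_i^{2}=a_i\ \text{for every }i,\qquad a_ia_j=0\ \text{for }i\ne j.
\]
Over $\ring=\integers$ these constraints force each $a_i\in\{0,1\}$ with at most one nonzero, so either $c=0$ or $c=\sigma_i$ for a single non-degenerate $n$-simplex $\sigma_i$, which is precisely the claim.

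The step I expect to be the main obstacle is verifying naturality of $\Xi_n$ rigorously: the inductive construction of $A$ in proposition~\ref{prop:infinfinite-iteration} must commute with $f$ at every stage $\alpha_k$, which in turn reduces to repeated application of the defining square of a Steenrod coalgebra morphism (definition~\ref{def:Steenrod-coalgebra}), together with the observation that the structural maps $\beta_n$ are formal rearrangements independent of the underlying coalgebra. Once this naturality is in hand, the two-expression comparison above and the integrality argument close the proof.
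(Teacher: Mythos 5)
Your argument is correct and has the same skeleton as the paper's proof: both exploit naturality of the evaluated iterated Steenrod diagonal applied to $f(\Delta^{n})$ and then a linear-independence argument in the target. Where you differ is in how the linear independence is cashed out. The paper writes $f(\Delta^{n})=\sum c_{k}\sigma_{k}^{n}$ and appeals to lemma~C.1 of \cite{smith-steenrod}, which says that the $\Xi$-image of a nontrivial combination is linearly independent of the $\Xi$-images of the individual simplices. You instead extract the constraint from the single coordinate $k=2$, where naturality reduces to the defining square of a Steenrod coalgebra morphism evaluated at $e_{n}\otimes\Delta^{n}$: proposition~B.5 gives $\eta_{n}\xi_{X}(e_{n}\otimes c)=\sum_{i}a_{i}\,\sigma_{i}\otimes\sigma_{i}$, naturality gives $c\otimes c=\sum_{i,j}a_{i}a_{j}\,\sigma_{i}\otimes\sigma_{j}$, and comparing coefficients in $\cfn X_{n}\otimes\cfn X_{n}$ (legitimate, since the nondegenerate $n$-simplices form a basis of the free module $\cfn X_{n}$, so the $\sigma_{i}\otimes\sigma_{j}$ are independent) forces $a_{i}^{2}=a_{i}$ and $a_{i}a_{j}=0$ for $i\ne j$, hence at most one $a_{i}=1$ over $\ints$. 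This is more self-contained: it avoids the external lemma~C.1, and it shows that the full tower $\Xi_{n}$ --- and hence the ``obstacle'' you flag, naturality of the inductively defined map $A$ --- is not actually needed for this corollary, only the binary Steenrod coproduct at $e_{n}$. What the paper's route buys is uniformity, since lemma~C.1 is the tool reused throughout \cite{smith-steenrod} and citing it keeps the arguments parallel.
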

\begin{rem*}
As the statement implies, we do not need $X$ to be an ordered simplicial
complex in this result.\end{rem*}
\begin{proof}
Since $f$ is a Steenrod coalgebra morphism, it induces a commutative
diagram
\[
\xyC{50pt}\xymatrix{{N(\Delta^{n})}\ar[r]^{\Xi_{n,N(\Delta^{n})}\qquad}\ar[d]_{f} & {\prod_{j=2}^{\infty}N(\Delta^{n})^{\otimes j}}\ar[d]^{\prod_{j=2}^{\infty}f^{\otimes j}}\\
{N(X)}\ar[r]_{\Xi_{n,N(X)}\qquad} & {\prod_{j=2}^{\infty}N(X)^{\otimes j}}
}
\]

Suppose 
\[
f(\Delta^{n})=\sum_{k=1}^{t}c_{k}\cdot\sigma_{k}^{n}\in\cfn X
\]
where the $\sigma_{k}^{n}$ are images of $n$-simplices of $X$ and
the $c_{k}\in\ints$. If $f(\Delta^{n})$ is not \emph{equal} to one
of the $\sigma_{k}^{n}$ (i.e. if more than one of the $c_{k}$ is
nonzero, or if only one is nonzero but not equal to $+1$), lemma~C.1
of \cite{smith-steenrod} implies that its image under $\Xi_{N(X)}$
in corollary~\ref{cor:simplex-image} is \emph{linearly independent
}of the images of the $\sigma_{k}^{n}$, a contradiction. 
\end{proof}
We also conclude that:
\begin{cor}
\label{cor:automorphisms-trivial}If $f:\cfn{\Delta^{n}}\to\cfn{\Delta^{n}}$
is 
\begin{enumerate}
\item an isomorphism of Steenrod coalgebras in dimension $n$ and 
\item an endomorphism of Steenrod coalgebras in lower dimensions 
\end{enumerate}
then $f$ is the identity map.\end{cor}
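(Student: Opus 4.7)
By Corollary~\ref{cor:n-simplices-map-to-simplices} applied to $f$, the element $f(\Delta^{n})$ is either zero or an $n$-simplex of $\Delta^{n}$; since $\Delta^{n}$ has exactly one $n$-simplex and $f$ is an isomorphism in dimension $n$, this forces $f(\Delta^{n})=\Delta^{n}$. I would then exploit the Alexander--Whitney diagonal, which arises as the degree-zero component $\xi(e_{0}\otimes\cdot)$ of the Steenrod diagonal $\xi$. Since $f$ is a Steenrod coalgebra morphism it intertwines this coproduct, giving
\[
\sum_{k=0}^{n}f([0,\ldots,k])\otimes f([k,\ldots,n])=\sum_{k=0}^{n}[0,\ldots,k]\otimes[k,\ldots,n]
\]
in $\cfn{\Delta^{n}}\otimes\cfn{\Delta^{n}}$. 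Matching components in each bidegree $(k,n-k)$, and using Corollary~\ref{cor:n-simplices-map-to-simplices} (applied through the inclusion of every subsimplex into $\Delta^{n}$) to guarantee that each $f$-image is zero or a single basis simplex, the uniqueness of rank-one tensor factorizations forces $f([0,\ldots,k])=[0,\ldots,k]$ and $f([k,\ldots,n])=[k,\ldots,n]$ for every $k$. Iterating this argument on the resulting identity-fixed fronts and backs, one obtains that $f$ is the identity on every contiguous subsimplex $[i,i{+}1,\ldots,j]$ of $\Delta^{n}$.

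For each $(n{-}1)$-face $\sigma_{i}=[0,\ldots,\hat{i},\ldots,n]$, Corollary~\ref{cor:n-simplices-map-to-simplices} combined with the chain identity $\sum_{i}(-1)^{i}f(\sigma_{i})=\sum_{i}(-1)^{i}\sigma_{i}$ and the linear independence of the $\sigma_{i}$ forces $f$ to induce a parity-preserving permutation $\pi$ of $\{0,\ldots,n\}$, which by the previous paragraph already fixes $0$ and $n$. Applying Alexander--Whitney naturality to $\sigma_{i}$ shows that $f$ must send the $j$-th front of $\sigma_{i}$ to the $j$-th front of $\sigma_{\pi(i)}$; as soon as $j$ is small enough that this front is a contiguous subsimplex of $\Delta^{n}$ (hence already $f$-fixed), the two fronts must coincide, iteratively pinning $\pi$ down to the identity and yielding $f(\sigma_{i})=\sigma_{i}$ for every $i$.

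Finally, iterating the Alexander--Whitney argument on each $\sigma_{i}$ (now $f$-fixed) gives identity on further subsimplices, namely the fronts and backs of the $\sigma_{i}$, and combining the chain equations $f(\partial\sigma_{i})=\partial\sigma_{i}$ across several adjacent $(n{-}1)$-faces propagates the identity to the remaining subsimplices by descending induction on dimension. The main technical obstacle lies in this last propagation: non-contiguous subsimplices such as $[0,2,4]\subset\Delta^{4}$ appear as a front or back of no already-fixed simplex and must be pinned down by solving the linear system obtained from simultaneously combining the chain equations of several adjacent $(n{-}1)$-faces.
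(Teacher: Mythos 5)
Your opening moves are sound and overlap with the paper's: corollary~\ref{cor:n-simplices-map-to-simplices} does force $f(\Delta^{n})=\Delta^{n}$, and matching the Alexander--Whitney component of the Steenrod diagonal bidegree by bidegree does pin $f$ down on all \emph{contiguous} subsimplices $[i,i+1,\dots,j]$. But the argument stalls exactly where you say it does, and the gap is real: the Alexander--Whitney coproduct only ever produces front and back faces, so no amount of iteration reaches a non-contiguous subsimplex such as $[0,2,4]\subset\Delta^{4}$, and the boundary relations you fall back on only determine $f$ on the codimension-one faces up to a sign-preserving permutation (your $\pi$), which your ``front-face'' step cannot break because the relevant fronts of $\sigma_{i}$ are themselves non-contiguous. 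The ``linear system over several adjacent faces'' is never actually solved, so the proposal does not constitute a proof.

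The paper closes this gap with two ingredients you are missing. First, a downward induction combined with the Pigeonhole Principle shows $f$ is bijective in \emph{every} dimension, and the sign analysis of $f(\partial s)=\partial f(s)$ for $1$-simplices then forces the induced vertex permutation to be order-preserving, hence the identity; consequently $f$ fixes every vertex and every edge of $\Delta^{n}$ --- including non-contiguous edges like $(0,2)$, since an edge is determined by its two vertices. Second, instead of the single coproduct, the paper applies the iterated Alexander--Whitney diagonal $\delta_{k}$ to an arbitrary $k$-simplex $w=(i_{0},\dots,i_{k})$ and examines its component in $\cfn{\Delta^{n}}_{1}^{\otimes k}$, namely the edge-chain $(i_{0},i_{1})\otimes(i_{1},i_{2})\otimes\cdots\otimes(i_{k-1},i_{k})$. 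Because all edges are already fixed, $f^{\otimes k}$ fixes this term, so $f(w)$ must have the same vertex set as $w$; since simplices of an ordered simplicial complex are determined by their vertices, $f(w)=w$ for every subsimplex, contiguous or not. If you want to salvage your approach, replace the bidegree-$(k,n-k)$ matching by this extreme-component argument, after first establishing that $f$ is the identity on vertices and edges.
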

\begin{proof}
Corollary~\ref{cor:n-simplices-map-to-simplices} implies that $f$
maps every sub-simplex of $\Delta^{n}$ to one of the same dimension.
We may identify a $k$ dimensional sub-simplex of $\Delta^{n}$ with
a set of $k+1$ vertices $\{i_{0},\dots,i_{k}\}$ with $i_{0}<\cdots<i_{k}$.

We are given that $f$ is an isomorphism in dimension $n$ --- i.e.
it is bijective. We use downward induction on dimension to show that
it is bijective in lower dimensions.

If $f$ is bijective in dimension $k$, \emph{every} set of $k+1$
vertices $\{j_{0},\dots,i_{k}\}$ occurs exactly \emph{once} as $f(\Delta_{\ell}^{k})$
for some $\ell$. Given any $k$-simplex, $\Delta^{k}$, with $f(\Delta^{k})=\{j_{0},\dots,i_{k}\}$,
the boundary $\partial f(\Delta^{k})$ is a linear combination of
$k+1$ \emph{distinct} faces --- namely all $k$-element subsets of
$\{j_{0},\dots,i_{k}\}$. Since $f$ is a chain-map, $f(\partial\Delta^{k})$
must be a linear combination of all $k$-element subsets of $\{j_{0},\dots,j_{k}\}$.
It follows that \emph{every} $k$-element subset of \emph{every} $k+1$-element
set occurs in $f(\Delta_{t}^{k-1})$ for \emph{some} $t=1,\dots,{n+1 \choose k}$.
The Pigeonhole Principle implies that each such $k$-element subset
occurs exactly \emph{once }in the image of \emph{$f$, }so that $f$
is bijective on $k-1$-simplices.

We conclude that $f$ is an \emph{automorphism} of $\cfn{\Delta^{n}}$.
Now we show that $f$ is the \emph{identity map:}

In dimension $0$, let $f$ be a permutation, $\pi:\{0,\dots,n\}\to\{0,\dots,n\}$
of vertices. If $s=(i_{1},i_{2})$ with $i_{1}<i_{2}$ is a 1-simplex,
$f(s)=(j_{1},j_{2})$ with $j_{1}<j_{2}$ is a 1-simplex, and 
\[
f(\partial s)=f(i_{1})-f(i_{2})=\partial f(s)=(j_{1})-(j_{2})=(\pi i_{1})-(\pi i_{2})
\]
Given the \emph{signs} of the terms in the boundary, we conclude that
$i_{1}<i_{2}\implies\pi i_{1}<\pi i_{2}$ for all $0\le i_{1}<i_{2}\le n$
(in other words, $\pi$ cannot \emph{swap} the ends of a 1-simplex).
This forces $\pi$ to be the identity permutation. It follows that
$f$ is the identity map on 1-simplices.

If $k>1$, $w=(i_{0},\dots,i_{k})$ is any $k$-simplex in $\Delta^{n}$,
and 
\[
\delta_{k}=\underbrace{(1\otimes\cdots\otimes\delta)\circ\cdots\circ\delta}_{k-1\text{ factors}}:\cfn{\Delta^{n}}\to\cfn{\Delta^{n}}^{\otimes k}
\]
where $\delta:\cfn{\Delta^{n}}\to\cfn{\Delta^{n}}\otimes\cfn{\Delta^{n}}$
is the Alexander-Whitney diagonal, then the image of $\delta_{k}(w)$
in 
\[
\cfn{\Delta^{n}}^{\otimes k}/\left(\cfn{\Delta^{n}}^{\otimes k}\right)_{0}
\]
is
\[
Z=(i_{0},i_{1})\otimes(i_{1},i_{2})\otimes\cdots\otimes(i_{k-1},i_{k})\in\cfn{\Delta^{n}}_{1}^{\otimes k}
\]
 where each edge, $(i_{t},i_{t+1})$, is the result of a sequence,
$F_{0}\cdots F_{t-1}F_{t+1}\cdots F_{n}$, of face-operations applied
to $w$. Since these edges are mapped via the identity map (by the
argument above) $f^{\otimes k}(Z)=Z\in\cfn{\Delta^{n}}_{1}^{\otimes k}$,
which implies that $f(w)$ has the same vertices as $w$ so $f(w)=w$.
It follows that $f$ is the identity map in all dimensions.
\end{proof}
A similar line of reasoning implies that:
\begin{cor}
\label{cor:cf-gives-simplices}Let $X$ be an ordered simplicial complex
and let 
\[
f:\cfn{\Delta^{n}}\to\cfn X
\]
map $\Delta^{n}$ to an $n$-simplex $\sigma\in N(X)$ defined by
the inclusion $\iota:\Delta^{n}\to X$. Then
\[
f(\cfn{\Delta^{n}})\subset\cfn{\iota}(\cfn{\Delta^{n}})
\]
so that $f=\cfn{\iota}$.\end{cor}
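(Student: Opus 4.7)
The plan is to prove $f = \cfn{\iota}$ by downward induction on simplex dimension, using two ingredients already established: Corollary \ref{cor:n-simplices-map-to-simplices}, which forces each sub-simplex of $\Delta^n$ to map under $f$ to either zero or a single simplex of $X$ of the same dimension, and the ordered-complex hypothesis on $X$, which guarantees that simplices with distinct vertex sets are distinct basis elements of $\cfn X$. Since $f(N(\Delta^n))\subset \cfn{\iota}(\cfn{\Delta^n})$ is an immediate consequence of $f=\cfn{\iota}$, the whole conclusion reduces to that equality of chain-maps.

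For the base case in dimension $n$, the hypothesis gives $f(\Delta^n)=\sigma=\iota(\Delta^n)$. Applying the chain-map condition $\partial\circ f=f\circ\partial$ to the generator $\Delta^n$ yields
\[
\sum_{i=0}^{n}(-1)^{i}\,f(F_{i}\Delta^{n})\;=\;\partial\sigma\;=\;\sum_{i=0}^{n}(-1)^{i}\,\iota(F_{i}\Delta^{n}).
\]
Because $\iota$ is injective on vertices and $X$ is ordered, the $n+1$ faces $\iota(F_{i}\Delta^{n})$ are pairwise distinct basis elements of $\cfn X$, and by Corollary~\ref{cor:n-simplices-map-to-simplices} each $f(F_{i}\Delta^{n})$ is either zero or a single basis element. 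Matching coefficients of each basis element forces $f(F_{i}\Delta^{n})=\iota(F_{i}\Delta^{n})$ for every $i$.

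For the inductive step, assume $f$ agrees with $\iota$ on every sub-simplex of $\Delta^n$ of dimension greater than $k$, where $k<n$. Given a $k$-simplex $\tau$, I would embed $\tau$ as a face of some $(k+1)$-simplex $\tau'$ of $\Delta^n$, which is always possible since $\Delta^n$ contains every nonempty subset of $\{0,\dots,n\}$ as a simplex. Applying $\partial\circ f=f\circ\partial$ to $\tau'$ and using $f(\tau')=\iota(\tau')$ produces an identity of signed sums over the faces of $\tau'$; the same matching argument (the $\iota(F_{j}\tau')$ again have pairwise distinct vertex sets) forces $f(F_{j}\tau')=\iota(F_{j}\tau')$ for each $j$, and in particular $f(\tau)=\iota(\tau)$.

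The main obstacle, and the precise reason the ordered-complex hypothesis is indispensable, is the passage from an identity of signed sums of basis elements to the equality of individual summands. For a more general simplicial set, distinct faces $F_{j}\tau'$ could have coincident images in $\cfn X$ or land on degenerate simplices, and cancellations in the sum would permit $f(F_{j}\tau')$ to differ from $\iota(F_{j}\tau')$ without contradicting the boundary identity. Because $X$ is an ordered simplicial complex, distinct vertex sets yield distinct generators of $\cfn X$, so no such cancellation occurs and term-by-term identification is forced. Once this is carried through every dimension, $f$ and $\cfn{\iota}$ agree on every generator of $\cfn{\Delta^n}$, whence $f=\cfn{\iota}$ as chain-maps.
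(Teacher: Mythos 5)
Your overall strategy (downward induction on dimension, exploiting that in an ordered complex the faces of a simplex are distinct generators so that no cancellation can occur in the boundary) is close in spirit to the paper's, but there is a genuine gap at the ``matching coefficients'' step. From
\[
\sum_{i=0}^{n}(-1)^{i}f(F_{i}\Delta^{n})=\sum_{i=0}^{n}(-1)^{i}\iota(F_{i}\Delta^{n})
\]
together with corollary~\ref{cor:n-simplices-map-to-simplices} you may conclude that every $f(F_{i}\Delta^{n})$ is nonzero and that the faces are matched bijectively --- but only that $f(F_{i}\Delta^{n})=\iota(F_{\pi(i)}\Delta^{n})$ for \emph{some} permutation $\pi$ satisfying $(-1)^{i}=(-1)^{\pi(i)}$. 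For instance, when $n=2$ the boundary identity alone does not exclude $f$ interchanging the faces $F_{0}$ and $F_{2}$, since both carry the sign $+1$. So the term-by-term identification $f(F_{i}\Delta^{n})=\iota(F_{i}\Delta^{n})$ is not forced at a single dimension, and because your inductive hypothesis at the next stage assumes exact agreement one dimension up, the induction does not close.

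The paper's proof avoids this by claiming less from the boundary argument: it only establishes the \emph{containment} $f(\cfn{\Delta^{n}})\subset\cfn{\iota}(\cfn{\Delta^{n}})$, for which ``no cancellation'' genuinely suffices, and then invokes corollary~\ref{cor:automorphisms-trivial} to upgrade the resulting endomorphism of $\cfn{\Delta^{n}}$ to the identity. That corollary is where the permutation ambiguity is actually killed: the signs in the boundary of a $1$-simplex force the induced vertex permutation to be order-preserving, hence trivial, and the iterated Alexander--Whitney diagonal --- i.e.\ the coalgebra structure, not merely the chain structure --- then pins down $f$ on higher simplices. You should either conclude by citing corollary~\ref{cor:automorphisms-trivial} (as the paper does) or reproduce its vertex-ordering and diagonal arguments; coefficient matching in each dimension separately does not do the job.
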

\begin{proof}
Since $X$ is an ordered simplicial complex, the map $\iota$ is an
inclusion.

Suppose $\Delta^{k}\subset\Delta^{n}$ and $f(\cfn{\Delta^{k}})_{k}\subset\cfn{\Delta^{k}}_{k}$.
Since the boundary of $\Delta^{k}$ is an alternating sum of $k+1$
faces, and since they must map to $k-1$-dimensional simplices of
$\cfn{f(\Delta^{k})}$ with the same signs (so no cancellations can
take place) we must have $f(F_{i}\Delta^{k})\subset\cfn{f(\Delta^{k})}$
and the conclusion follows by downward induction on dimension. The
final statements follow immediately from corollary~\ref{cor:automorphisms-trivial}.
\end{proof}
Next, we consider \emph{degeneracies:}
\begin{prop}
\label{prop:degeneracies}If $n>m$, then surjective Steenrod-coalgebra
morphisms
\[
f:N(\Delta^{n})\to N(\Delta^{m})
\]
 are in a 1-1 correspondence with surjective morphisms
\[
\mathbf{f}:\mathbf{n}\twoheadrightarrow\mathbf{m}
\]
of ordered sets, where $\mathbf{n}=0<\cdots<n$ and $\mathbf{m}=0<\cdots<m$.
In particular, $f$ is induced by the simplicial map
\[
\hat{f}:\Delta^{n}\to\Delta^{m}
\]
corresponding to $\mathbf{f}$.\end{prop}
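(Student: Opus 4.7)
One direction is immediate: given a surjective order-preserving map $\mathbf{f}\colon\mathbf{n}\twoheadrightarrow\mathbf{m}$, the associated simplicial map $\hat{f}\colon\Delta^{n}\to\Delta^{m}$ induces a chain map $N(\hat{f})\colon N(\Delta^{n})\to N(\Delta^{m})$ whose compatibility with the Steenrod diagonal is the naturality of $\xi_{(-)}$ (appendix~B of \cite{smith-steenrod}), and it is surjective because $\mathbf{f}$ is. The substance of the proposition lies in the opposite direction: every surjective Steenrod-coalgebra morphism $f$ arises this way, from a \emph{unique} $\mathbf{f}$. The plan is to read $\mathbf{f}$ off from the action of $f$ on vertices and then identify $f$ with $N(\hat{f})$ by induction on sub-simplex dimension, in the style of Corollaries~\ref{cor:automorphisms-trivial} and~\ref{cor:cf-gives-simplices}.

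\textbf{Extracting $\mathbf{f}$.} For each vertex $i\in\mathbf{n}$, applying Corollary~\ref{cor:n-simplices-map-to-simplices} to the Steenrod-coalgebra composite
\[
N(\Delta^{0})\hookrightarrow N(\Delta^{n})\xrightarrow{f}N(\Delta^{m})
\]
(the first map is Steenrod-coalgebraic by naturality applied to the simplicial inclusion at vertex $i$) shows that $f(i)$ is either zero or a single vertex of $\Delta^{m}$. The zero case must be ruled out: if some but not all $f(i)=0$, choose an edge $(i_{1},i_{2})$ with $f(i_{1})=0$ and $f(i_{2})$ equal to a vertex $v$; then $\partial f(i_{1},i_{2})=v$, which is impossible because $v$ has augmentation~$1$ and so is not a boundary in $N(\Delta^{m})$. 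If every vertex mapped to zero, $f$ would fail to be surjective in dimension~$0$. Hence each $i$ determines a vertex $\mathbf{f}(i)\in\mathbf{m}$, and surjectivity of $f$ in dimension~$0$ forces $\mathbf{f}$ onto. Applying Corollary~\ref{cor:n-simplices-map-to-simplices} next to an edge $(i_{1},i_{2})$ with $i_{1}<i_{2}$ (as in the 1-simplex argument inside Corollary~\ref{cor:automorphisms-trivial}) and matching $\partial f(i_{1},i_{2})$ with $\mathbf{f}(i_{2})-\mathbf{f}(i_{1})$ forces $\mathbf{f}(i_{1})\le\mathbf{f}(i_{2})$, with equality precisely when $f(i_{1},i_{2})=0$. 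So $\mathbf{f}$ is an order-preserving surjection.

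\textbf{Identifying $f$ with $N(\hat{f})$.} Let $\hat{f}$ be the simplicial map induced by $\mathbf{f}$. I will show by induction on $k$ that $f(\sigma)=N(\hat{f})(\sigma)$ for every sub-$k$-simplex $\sigma=(i_{0}<\cdots<i_{k})$ of $\Delta^{n}$; the case $k=0$ is the previous paragraph. For the inductive step, Corollary~\ref{cor:n-simplices-map-to-simplices} applied to the composite $N(\Delta^{k})\hookrightarrow N(\Delta^{n})\xrightarrow{f}N(\Delta^{m})$ says $f(\sigma)$ is either $0$ or a single $k$-simplex of $\Delta^{m}$ with coefficient $+1$. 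Because $f$ and $N(\hat{f})$ are both chain maps and agree on $(k-1)$-simplices by induction, $\partial f(\sigma)=f(\partial\sigma)=N(\hat{f})(\partial\sigma)=\partial N(\hat{f})(\sigma)$. If the $\mathbf{f}(i_{j})$ have any repetition, then $N(\hat{f})(\sigma)=0$, so $\partial f(\sigma)=0$; but no non-zero $k$-simplex in $N(\Delta^{m})$ is a cycle, hence $f(\sigma)=0=N(\hat{f})(\sigma)$. If the $\mathbf{f}(i_{j})$ are all distinct, then $N(\hat{f})(\sigma)$ is a non-zero $k$-simplex of $\Delta^{m}$, uniquely reconstructible from its boundary (its vertex set is the union of those of its faces, and $\Delta^{m}$ is ordered), so $f(\sigma)=N(\hat{f})(\sigma)$. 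The two assignments $f\leftrightarrow\mathbf{f}$ are inverse to each other by construction, giving the claimed bijection.

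\textbf{Main obstacle.} The inductive step is the delicate part: one must simultaneously handle the collision case (showing $\partial f(\sigma)=0$ \emph{forces} $f(\sigma)=0$, not merely permits a cycle) and the non-collision case (recovering the simplex from its boundary, with the correct sign $+1$). Both checks are elementary once Corollary~\ref{cor:n-simplices-map-to-simplices} is in hand to restrict $f(\sigma)$ to at most a single simplex, but arranging the machinery correctly---the sub-coalgebra inclusions, the induction on dimension, and the chain-map identity relating $f$ to $N(\hat{f})$ one dimension at a time---is the technical core of the argument.
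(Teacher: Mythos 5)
Your proposal is correct and follows essentially the same route as the paper: both directions of the bijection are obtained by restricting $f$ to vertices and, conversely, passing from $\mathbf{f}$ to the induced simplicial map, with Corollaries~\ref{cor:n-simplices-map-to-simplices} and~\ref{cor:automorphisms-trivial} doing the real work. Your write-up is in fact somewhat more complete than the paper's, which leaves implicit the points you verify explicitly (that no vertex can map to zero, that the vertex map is order-preserving, and the dimension-by-dimension identification $f=N(\hat{f})$ via boundaries --- the order-preservation being essentially the argument the paper defers to Proposition~\ref{prop:image-steenrod-is-simplex}).
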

\begin{proof}
Certainly any surjective Steenrod-coalgebra morphism, $f$, defines
a surjective morphism of vertices: $\mathbf{f}=\alpha(f)$. Given
$\mathbf{f}$, corollary~\ref{cor:n-simplices-map-to-simplices}
implies that the $m$-dimensional sub-simplices of $\Delta^{n}$ can
either map to $\Delta^{m}$ (in a \emph{unique} way, by corollary~\ref{cor:automorphisms-trivial})
or $0$. The sets
\[
\mathbf{f}^{-1}(0),\dots,\mathbf{f}^{-1}(m)
\]
represent sub-simplices of $\Delta^{n}$, which we can imagine that
$\mathbf{f}$ collapses to points --- defining a morphism of ordered
simplicial complexes and a chain-map. Each possible selection $i_{0}\in\mathbf{f}^{-1}(0),\dots,i_{m}\in\mathbf{f}^{-1}(m)$
defines a unique $m$-simplex $\Delta_{i_{0},\dots,i_{m}}^{m}\subset\Delta^{n}$
for which there is a \emph{unique} Steenrod coalgebra morphism (by
corollary~\ref{cor:automorphisms-trivial}) 
\begin{equation}
f_{i_{0},\dots,i_{m}}:N(\Delta_{i_{0},\dots,i_{m}}^{m})\to N(\Delta^{m})\label{eq:degeneracy-morphisms}
\end{equation}
We can define a Steenrod coalgebra morphism 
\[
f:N(\Delta^{n})\to N(\Delta^{m})
\]
 that sends \emph{each} of these to $N(\Delta^{m})$ and all other
sub-simplices of $\Delta^{n}$ to $0$. We will call this morphism
$\beta(\mathbf{f})$.

It is not hard to see that $\mathbf{f}=\alpha\circ\beta(\mathbf{f})$.
That $f=\beta\circ\alpha(f)$ follows from the \emph{uniqueness} of
the morphisms $\{f_{i_{0},\dots,i_{m}}\}$ in equations~\ref{eq:degeneracy-morphisms}.
It follows that $\alpha$ and $\beta$ define inverse one-to-one correspondences.

The final statement follows from the \emph{uniqueness} of Steenrod-coalgebra
morphisms corresponding to a surjective morphism $\mathbf{f}:\mathbf{n}\twoheadrightarrow\mathbf{m}$
and the fact that a simplicial map
\[
\hat{f}:\Delta^{n}\to\Delta^{m}
\]
induces a Steenrod-coalgebra morphism.
\end{proof}
We finally have:
\begin{prop}
\label{prop:image-steenrod-is-simplex}Let $X$ be an ordered simplicial
complex whose vertices form an ordered set $\mathbf{m}$ (which induces
orderings on all of the simplices of $X$). In addition, let $f:N(\Delta^{n})\to N(X)$
be a Steenrod coalgebra morphism whose restriction to vertices induces
a set-map
\[
\mathbf{f}:\mathbf{n}\to\mathbf{m}
\]
with $\mathbf{f}(\mathbf{n})=\{i_{0},\dots,i_{k}\}\subset\mathbf{m}$.
Then
\begin{enumerate}
\item the set-map, $\mathbf{f}$, preserves the order-relation, and
\item if $\{j_{0},\dots,j_{k}\}\subset\mathbf{n}$ is any $k+1$-element
set with $\mathbf{f}(\{j_{0},\dots,j_{k}\})=\{i_{0},\dots,i_{k}\}$,
then $f(N(\Delta_{j_{0},\dots,j_{k}}^{k}))=N(\Delta_{i_{0},\dots,i_{k}}^{k})\subset N(X)$
--- where $\Delta_{j_{0},\dots,j_{k}}^{k}\subset\Delta^{n}$ is the
$k$-dimensional sub-simplex defined by $\{j_{0},\dots,j_{k}\}$ and
$\Delta_{i_{0},\dots,i_{k}}^{k}\subset X$ is a $k$-dimensional simplex
with vertices $\{i_{0},\dots,i_{k}\}$. 
\end{enumerate}
In particular, the image of $f$ is $N(\Delta_{i_{0},\dots,i_{k}}^{k})\subset N(X)$.\end{prop}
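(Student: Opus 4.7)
The plan is to derive both conclusions from Corollaries~\ref{cor:n-simplices-map-to-simplices}, \ref{cor:automorphisms-trivial}, and \ref{cor:cf-gives-simplices}, combined with a downward-to-upward induction on dimension and the fact that every simplex of $X$ is uniquely determined by its vertices (so $N(X)$ has no degenerate generators).

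For part (1), I would start with $1$-simplices. Given $j_{1}<j_{2}$ in $\mathbf{n}$, the restriction of $f$ to $N(\Delta_{j_{1},j_{2}}^{1})\cong N(\Delta^{1})$ is a Steenrod coalgebra morphism into $N(X)$, so by Corollary~\ref{cor:n-simplices-map-to-simplices} the generator maps either to $0$ or to a non-degenerate $1$-simplex of $X$. Comparing the two sides of $f(\partial\Delta_{j_{1},j_{2}}^{1})=\partial f(\Delta_{j_{1},j_{2}}^{1})$ in the non-zero case gives $\mathbf{f}(j_{1})-\mathbf{f}(j_{2})=(a)-(b)$ for some $a<b$, and the sign pattern (exactly as in the proof of Corollary~\ref{cor:automorphisms-trivial}) forces $\mathbf{f}(j_{1})<\mathbf{f}(j_{2})$. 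In the zero case, $\mathbf{f}(j_{1})=\mathbf{f}(j_{2})$ because no $1$-simplex in $N(X)$ has zero boundary. Either way $\mathbf{f}(j_{1})\le\mathbf{f}(j_{2})$, which is the order-preservation claim.

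For part (2), I would induct on $k$. Fix $\{j_{0}<\cdots<j_{k}\}\subset\mathbf{n}$ with $\mathbf{f}(\{j_{0},\dots,j_{k}\})=\{i_{0},\dots,i_{k}\}$; by (1) and the fact that $\mathbf{f}$ restricts to a bijection on this subset, we must have $\mathbf{f}(j_{l})=i_{l}$. The base case $k=0$ is immediate. For the inductive step, the hypothesis applied to each face $\Delta_{j_{0},\dots,\hat{j}_{l},\dots,j_{k}}^{k-1}$ yields $f(N(\Delta_{j_{0},\dots,\hat{j}_{l},\dots,j_{k}}^{k-1}))=N(\Delta_{i_{0},\dots,\hat{i}_{l},\dots,i_{k}}^{k-1})$. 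Because the subsets $\{i_{0},\dots,\hat{i}_{l},\dots,i_{k}\}$ are pairwise distinct and $X$ is an ordered simplicial complex, the $k+1$ faces on the right are pairwise distinct generators of $N(X)_{k-1}$, so $f(\partial\Delta_{j_{0},\dots,j_{k}}^{k})=\partial\Delta_{i_{0},\dots,i_{k}}^{k}\ne 0$. Consequently $f(\Delta_{j_{0},\dots,j_{k}}^{k})\ne 0$; Corollary~\ref{cor:n-simplices-map-to-simplices} then identifies it with a single $k$-simplex of $X$ whose vertex set is $\{i_{0},\dots,i_{k}\}$, which must be $\Delta_{i_{0},\dots,i_{k}}^{k}$. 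Applying Corollary~\ref{cor:cf-gives-simplices} upgrades this to the equality $f(N(\Delta_{j_{0},\dots,j_{k}}^{k}))=N(\Delta_{i_{0},\dots,i_{k}}^{k})$.

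The concluding assertion about the image of $f$ then follows formally: every sub-simplex of $\Delta^{n}$ has vertex set contained in $\{i_{0},\dots,i_{k}\}$, placing its image inside $N(\Delta_{i_{0},\dots,i_{k}}^{k})$, while surjectivity onto that subcomplex is an instance of (2) applied to any choice $j_{l}\in\mathbf{f}^{-1}(i_{l})$. The step I expect to require the most care is the non-vanishing of $f(\Delta_{j_{0},\dots,j_{k}}^{k})$ in the inductive step: the chain-map identity alone only computes the boundary, so I must know that the images $N(\Delta_{i_{0},\dots,\hat{i}_{l},\dots,i_{k}}^{k-1})\subset N(X)$ are genuinely distinct generators whose alternating sum cannot collapse. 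This is exactly where the ordered simplicial complex hypothesis on $X$ is indispensable --- in an arbitrary simplicial set the faces could coincide or pair off via degeneracies, allowing a non-zero chain in $N(\Delta^{n})$ to map to something with vanishing boundary.
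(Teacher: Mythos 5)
Your proof is correct and follows essentially the same route as the paper's: the order-preservation is extracted from the chain-map identity on $1$-simplices exactly as in the proof of Corollary~\ref{cor:automorphisms-trivial}, and the inductive step on $k$ uses the distinctness of the $(k-1)$-dimensional faces (forced by injectivity of $\mathbf{f}$ on $\{j_{0},\dots,j_{k}\}$ and the ordered-simplicial-complex hypothesis) to rule out $f(\Delta_{j_{0},\dots,j_{k}}^{k})=0$ before invoking Corollary~\ref{cor:n-simplices-map-to-simplices}. Your explicit treatment of the degenerate case in dimension $1$ and the appeal to Corollary~\ref{cor:cf-gives-simplices} to upgrade the top-cell statement to the subcomplex equality are minor elaborations of what the paper leaves implicit.
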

\begin{rem*}
It follows that $f$ is a surjection onto a sub-Steenrod coalgebra
of $N(X)$ generated by a \emph{single simplex} of $X$.\end{rem*}
\begin{proof}
We prove this by induction on $k$. It is clearly true for $k=0$.

In dimension $1$, $\mathbf{f}|\{j_{0},j_{1}\}$ is bijective so that
the image of a $1$-simplex defined by $(i<j)$ with distinct endpoints
must be a $1$-simplex $(k<\ell)$ in $X$ with distinct endpoints.
Since the boundary is $j_{1}-j_{0}$ in $N(\Delta_{j_{0},j_{1}}^{1})$
and since $f$ is a \emph{chain-map,} it follows that $i<j\in\mathbf{n}$
implies that $\mathbf{f}(i)\le\mathbf{f}(j)\in\mathbf{m}$ (compare
with the 1-dimensional case in the proof of corollary~\ref{cor:automorphisms-trivial}).

If $\{j_{0},\dots,j_{k}\}\subset\mathbf{n}$ maps to $\{i_{0},\dots,i_{k}\}$
under $\mathbf{f}$, then the inductive hypothesis implies that every
$k$-element subset of $\{j_{0},\dots,j_{k}\}$ represents a $k-1$-dimensional
face of $\Delta^{n}$ that maps to a $k-1$-dimensional simplex of
$X$. In other words the faces of $\Delta_{j_{0},\dots,j_{k}}^{k}$
map nontrivially to $k-1$-dimensional simplices of $X$ that are
\emph{distinct} because
\[
\mathbf{f}|\{j_{0},\dots,j_{k}\}
\]
 is bijective.

If $\sigma\in N(\Delta_{j_{0},\dots,j_{k}}^{k})_{k}$ represents the
sub-simplex $\Delta_{j_{0},\dots,j_{k}}^{k}\subset\Delta^{n}$, corollary\ref{cor:n-simplices-map-to-simplices}
implies that $f(\sigma)$ is either $0$ or a generator of $N(X)$
representing a $k$-simplex, $\Delta_{i_{0},\dots,i_{k}}^{k}$. The
case where $f(\sigma)=0$ contradicts the fact that $f$ is a chain-map
and its \emph{boundary} maps to a nonzero element of $N(X)$ (a linear
combination of \emph{distinct} $k-1$-dimensional simplices of $X$).
\end{proof}
We finally arrive at:
\begin{thm}
\label{thm:Steenrod-morphisms-geometric}If $X$ is an ordered simplicial
complex, any Steenrod-coalgebra morphism
\[
f:N(\Delta^{n})\to N(X)
\]
is induced by a simplicial map
\[
\hat{f}:\Delta^{n}\to X
\]
\end{thm}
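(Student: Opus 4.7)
The plan is to reduce the theorem to the earlier results of this section. I would first analyze $f$ on vertices, then use Proposition~\ref{prop:image-steenrod-is-simplex} to identify the image as a sub-Steenrod coalgebra of the form $N(\sigma)$ for some simplex $\sigma$ of $X$, and finally invoke either Proposition~\ref{prop:degeneracies} or Corollary~\ref{cor:cf-gives-simplices} to recover the desired simplicial map.

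First I would show that, apart from the trivial case $f = 0$, every vertex of $\Delta^n$ is sent by $f$ to a vertex of $X$. Corollary~\ref{cor:n-simplices-map-to-simplices} in dimension $0$ forces each vertex to go to $0$ or a single vertex of $X$. Applying the same corollary to each edge inclusion $N(\Delta^1) \hookrightarrow N(\Delta^n)$ (which is itself a Steenrod-coalgebra morphism), $f(e)$ is either $0$ or a single $1$-simplex of $X$ for any edge $e = (v_i,v_j)$. Comparing $\partial f(e)$ with $f(\partial e) = f(v_j) - f(v_i)$ in each case forces $f(v_i)$ and $f(v_j)$ to both vanish or both be genuine vertices. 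Since the $1$-skeleton of $\Delta^n$ is connected, this dichotomy propagates across all of $\mathbf{n}$, so either $f$ is identically zero (which may be set aside) or $f$ induces an honest set-map on vertices.

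With the vertex restriction $\mathbf{f}\colon \mathbf{n} \to \mathbf{m}$ now a set-map, Proposition~\ref{prop:image-steenrod-is-simplex} yields that $\mathbf{f}$ preserves order and that the image of $f$ is $N(\sigma) \subseteq N(X)$, where $\sigma$ is the single simplex of $X$ with vertices $\{i_0,\dots,i_k\} = \mathbf{f}(\mathbf{n})$ and $k \le n$. Because $X$ is an ordered simplicial complex, $\sigma$ is canonically identified with $\Delta^k$, and $f$ factors as
\[
N(\Delta^n) \xrightarrow{\;g\;} N(\Delta^k) \;\cong\; N(\sigma) \hookrightarrow N(X),
\]
where $g$ is a surjective Steenrod-coalgebra morphism and the last arrow is induced by the simplicial inclusion $\iota\colon \sigma \hookrightarrow X$.

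To finish I would split into two cases. If $k < n$, Proposition~\ref{prop:degeneracies} directly produces a simplicial surjection $\hat{g}\colon \Delta^n \twoheadrightarrow \Delta^k$ with $N(\hat{g}) = g$. If $k = n$, then $\mathbf{f}$ is an order-preserving bijection; a sign check on the boundary forces $g(\Delta^n) = +\Delta^n$, so Corollary~\ref{cor:cf-gives-simplices} identifies $g$ with the identity chain map. In either case, the composite $\hat{f} = \iota \circ \hat{g}\colon \Delta^n \to X$ is the desired simplicial map inducing $f$. The main obstacle is securing the preliminary vertex dichotomy, since both Propositions~\ref{prop:image-steenrod-is-simplex} and \ref{prop:degeneracies} presuppose that vertex-level behavior is given by a set-map; once that is in hand, the remainder is a case-split assembly of earlier results.
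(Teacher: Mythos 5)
Your proposal is correct and follows essentially the same route as the paper, which likewise deduces the result from Proposition~\ref{prop:image-steenrod-is-simplex} (the image is a single simplex) together with Corollary~\ref{cor:automorphisms-trivial} and Proposition~\ref{prop:degeneracies} (the resulting surjection onto that simplex is simplicial). Your preliminary vertex-dichotomy argument is a welcome extra step --- the paper's one-line proof tacitly assumes the vertex-level set-map hypothesis of Proposition~\ref{prop:image-steenrod-is-simplex}, and your edge-by-edge boundary comparison is exactly what justifies it (modulo the degenerate case $f=0$, which both you and the paper set aside).
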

\begin{proof}
Proposition~\ref{prop:image-steenrod-is-simplex} implies that the
image of $f$ is generated by a single simplex of $X$, and corollary~\ref{cor:automorphisms-trivial}
and proposition~\ref{prop:degeneracies} imply that this is induced
by a simplicial map.
\end{proof}
We define a complement to the $\cfn *$-functor: 
\begin{defn}
\label{def:fc}Define a functor
\[
\ss\nfc *:\steen\to\ss
\]
to the category of simplicial sets, as follows:

If $C\in\steen$, define the $n$-simplices of $\ss\nfc C$ to be
the Steenrod coalgebra morphisms
\[
\ns n\to C
\]
where $\ns n=\cfn{\Delta^{n}}$ is the normalized chain-complex of
the standard $n$-simplex, equipped with the Steenrod coalgebra structure
defined in theorem~B.2 of \cite{smith-steenrod}.

Face-operations are duals of coface-operations
\[
d_{i}:[0,\dots,i-1,i+1,\dots n]\to[0,\dots,n]
\]
with $i=0,\dots,n$ and vertex $i$ in the target is \emph{not} in
the image of $d_{i}$.

Degeneracies are duals of codegeneracy operators
\begin{align*}
s_{i}:[0,\dots,i,i+1,\dots,n+1] & \to[0,\dots,n]\\
i & \mapsto i\\
i+1 & \mapsto i
\end{align*}
\end{defn}
\begin{prop}
\label{prop:ux-map}If $X$ is an ordered simplicial complex there
exists a natural inclusion
\[
u_{X}:X\to\ss\nfc{\cfn X}
\]
\end{prop}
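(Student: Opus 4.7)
The plan is to define $u_X$ by sending each $n$-simplex $\sigma\in X_n$ (equivalently, a simplicial map $\sigma\colon\Delta^n\to X$) to the induced chain map $\cfn\sigma\colon\ns n\to\cfn X$. Since the Steenrod diagonal is functorial under simplicial maps (appendix~B of \cite{smith-steenrod}), $\cfn\sigma$ is automatically a morphism of Steenrod coalgebras, so it is a bona-fide $n$-simplex of $\ss\nfc{\cfn X}$.

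To verify that $u_X$ is a map of simplicial sets, I would unwind Definition~\ref{def:fc}: the face and degeneracy operators in $\ss\nfc{\cfn X}$ are given by precomposition with $\cfn{d_i}$ and $\cfn{s_i}$, respectively, while in $X$ itself they are $d_i\sigma=\sigma\circ d_i$ and $s_i\sigma=\sigma\circ s_i$. Functoriality of $\cfn{-}$ then yields
\[
u_X(d_i\sigma)=\cfn\sigma\circ\cfn{d_i}=d_i\bigl(u_X(\sigma)\bigr),\qquad u_X(s_i\sigma)=s_i\bigl(u_X(\sigma)\bigr),
\]
so $u_X$ is simplicial. Naturality in $X$ is the same calculation packaged differently: for a morphism $g\colon X\to Y$ of ordered simplicial complexes, $u_Y(g\sigma)=\cfn{g\circ\sigma}=\cfn g\circ\cfn\sigma=\ss\nfc{\cfn g}(u_X(\sigma))$.

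Finally I would show that $u_X$ is a dimensionwise injection. If $u_X(\sigma_1)=u_X(\sigma_2)$ for $\sigma_1,\sigma_2\in X_n$, then evaluating the common chain map on the vertices of $\Delta^n$ recovers the weakly ordered vertex lists of $\sigma_1$ and $\sigma_2$; because $X$ is an ordered simplicial complex, its simplices (degenerate or not) are uniquely determined by their vertex sequences, so $\sigma_1=\sigma_2$. The only mildly delicate point in this plan is that $\cfn{s_i}$ annihilates the top generator of $\ns{n+1}$, so one must check that $u_X(s_i\sigma)$ and $s_i(u_X(\sigma))$ really agree on \emph{all} generators of $\ns{n+1}$ and not merely at the top dimension; this agreement is automatic from the same simplicial identities that drove the functoriality argument, so no serious obstacle is expected.
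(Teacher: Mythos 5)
Your proposal is correct and follows essentially the same route as the paper: send each simplex (viewed via its canonical inclusion $\iota\colon\Delta^{k}\to X$) to the induced Steenrod-coalgebra morphism $\cfn{\iota}$ and check compatibility with face operators by functoriality; the paper's own proof is just a terser version of this. Your extra care about degeneracy operators is harmless but vacuous here, since an ordered simplicial complex has no degeneracies, and your added verifications of injectivity and naturality only make explicit what the paper leaves to the reader.
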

\begin{proof}
To prove the first statement, note that any simplex $\Delta^{k}$
in $X$ comes equipped with a canonical inclusion
\[
\iota:\Delta^{k}\to X
\]
The corresponding order-preserving map of vertices induces an Steenrod-coalgebra
morphism 
\[
\cfn{\iota}:\cfn{\Delta^{k}}=\ns k\to\cfn X
\]
so $u_{X}$ is defined by
\[
\Delta^{k}\mapsto\cfn{\iota}
\]
It is not hard to see that this operation respects face-operations.
\end{proof}
So, $\ss\nfc{\cfn X}$ naturally contains a copy of $X$. The interesting
question is how much \emph{more} it contains:
\begin{thm}
\label{thm:simplicial-complexes-determined}If $X\in\simpc$ is an
ordered simplicial complex, then 
\[
\ss\nfc{\cfn X}=\ds(X)
\]
 and the canonical map
\[
\iota_{X}:X\to\ss\nfc{\cfn X}
\]
that sends $X$ to the nondegenerate simplices of $\ds(X)$ induces
a homeomorphism 
\[
H:|X|\to|\ds(X)|
\]
of topological realizations.\end{thm}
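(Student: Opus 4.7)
The plan is to use Theorem~\ref{thm:Steenrod-morphisms-geometric} and Proposition~\ref{prop:degeneracies} to give an explicit description of the simplicial set $\ss\nfc{\cfn X}$, identify it with $\ds(X)$, and then compare topological realizations cell by cell.

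First, by Theorem~\ref{thm:Steenrod-morphisms-geometric} every Steenrod-coalgebra morphism $f: \ns n \to \cfn X$ arises from a unique simplicial map $\hat f: \Delta^n \to X$. Since $X$ is an ordered simplicial complex, such maps are in bijection with weakly order-preserving vertex assignments $\mathbf{n} \to V(X)$ whose image spans a simplex of $X$ --- which is precisely the description of the $n$-simplices of $\ds(X)$. The face and degeneracy operators on $\ss\nfc{\cfn X}$ defined in Definition~\ref{def:fc} are duals of the coface and codegeneracy morphisms of $\Delta^n$, so by naturality of $\cfn{*}$ they correspond under the bijection to the standard simplicial operators of $\ds(X)$. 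This yields the equality $\ss\nfc{\cfn X} = \ds(X)$.

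Next I verify that $\iota_X: X \to \ss\nfc{\cfn X}$ from Proposition~\ref{prop:ux-map} lands precisely on the nondegenerate simplices of $\ds(X)$. The map $\iota_X$ sends each simplex $\sigma \subset X$ to the coalgebra morphism induced by its inclusion, whose associated vertex map is an order-preserving injection, hence a nondegenerate $k$-simplex. Conversely, Proposition~\ref{prop:degeneracies} shows that every non-injective vertex map factors through a proper surjection $\mathbf{n} \twoheadrightarrow \mathbf{m}$, and so corresponds to a degenerate simplex outside the image of $\iota_X$. Hence $\iota_X$ is a bijection from $X$ onto the nondegenerate simplices of $\ds(X)$, and it clearly commutes with the face operators inherited on both sides from the coface maps $d_i$.

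Finally, to establish the homeomorphism $H:|X| \to |\ds(X)|$ I will use the standard fact that the realization of a simplicial set is built from one closed cell per nondegenerate simplex, with degenerate simplices collapsed; the attaching maps are determined entirely by the face operators. By the previous paragraph, $\iota_X$ is a cell-preserving bijection between the nondegenerate simplices of $X$ (as an ordered simplicial complex) and those of $\ds(X)$ that commutes with face operators, so $H$ is a continuous bijection restricting to a homeomorphism on each closed cell. Such a map between CW complexes is itself a homeomorphism, giving the conclusion. The main obstacle is the first step --- the simplicial-set identification $\ss\nfc{\cfn X} = \ds(X)$ --- since one needs not only a set-level bijection on $n$-simplices but genuine compatibility of face and degeneracy operators, which in turn relies on the naturality of $\cfn{*}$ with respect to every coface and codegeneracy map in the simplicial category, together with the fact that Theorem~\ref{thm:Steenrod-morphisms-geometric} upgrades the set-theoretic bijection to a simplicial one.
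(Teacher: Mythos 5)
Your proposal is correct and follows essentially the same route as the paper: both use Theorem~\ref{thm:Steenrod-morphisms-geometric} to identify the simplices of $\ss\nfc{\cfn X}$ with simplicial maps $\Delta^n\to X$, i.e.\ with the disjoint union $\bigsqcup_{\mathbf{m}\twoheadrightarrow\mathbf{n}}X_n=\ds(X)_m$, distinguishing nondegenerate simplices as those with injective vertex maps. The only difference is cosmetic: you verify the compatibility of face/degeneracy operators and the cell-by-cell homeomorphism explicitly, where the paper simply cites Proposition~\ref{prop:homotopy-equiv-sd-ds} (Rourke--Sanderson) for the statement $|X|\cong|\ds(X)|$.
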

\begin{rem*}
Since $\ds(X)$ is degeneracy-free, its nondegenerate simplices form
a sub-complex. The homeomorphism, $H$, is essentially the identity
map.\end{rem*}
\begin{proof}
Theorem~\ref{thm:Steenrod-morphisms-geometric} implies that
\[
\ss\nfc{N(X)}=\bigsqcup_{\mathbf{m}\twoheadrightarrow\mathbf{n}}X_{n}=\ds(X)
\]
since Steenrod-coalgebra morphisms between Steenrod coalgebras of
simplices are \emph{always} induced by \emph{simplicial maps.} The
vertex maps are monomorphisms for the simplices of $X$ \emph{only,}
and \emph{proper} surjections (i.e. not 1-1) \emph{only} for the added
degenerate simplices. The added degenerate simplices are only subject
to the basic identities between face- and degeneracy-operators. The
conclusion follows from proposition~\ref{prop:homotopy-equiv-sd-ds}. 
\end{proof}
This immediately implies
\begin{cor}
\label{cor:cellular-determines-pi1}If $X$ and $Y$ are ordered simplicial
complexes, any morphism of Steenrod coalgebras 
\[
g:\cfn X\to\cfn Y
\]
induces a map
\[
\hat{g}:\ss\nfc{\cfn X}=\ds(X)\to\ss\nfc{\cfn Y}=\ds(Y)
\]
of simplicial sets and a map of topological realizations
\[
|X|=|\ds(X)|\xrightarrow{\hat{|g|}}|\ds(Y)|=|Y|
\]
where $|*|$ is topological realization. In addition, the diagram
\[
\xymatrix{{H_{i}(N(X))}\ar[r]^{g}\ar[d]_{H_{i}(j_{X})}^{\cong} & {H_{i}(N(Y))}\ar[d]_{\cong}^{H_{i}(j_{Y})}\\
{H_{i}(C(\ds(X)))}\ar[r]_{H_{*}(\hat{g})} & {H_{i}(C(\ds(Y)))}
}
\]
commutes for all $i\ge0$, where $j_{X}$ and $j_{Y}$ are chain-maps
induced by the inclusion of $N(X)$ and $N(Y)$ into $C(\ds(X))$
and $C(\ds(Y))$, respectively.

If $g$ is an isomorphism, then $|\hat{g}|$ is a homeomorphism.\end{cor}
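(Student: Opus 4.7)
The strategy is to leverage the functoriality already established. Since $\ss\nfc{*}$ is defined as a functor from $\steen$ to simplicial sets (Definition~\ref{def:fc}), any Steenrod coalgebra morphism $g:\cfn X\to\cfn Y$ yields a simplicial map $\hat g := \ss\nfc{g}:\ss\nfc{\cfn X}\to \ss\nfc{\cfn Y}$ by post-composition: an $n$-simplex of $\ss\nfc{\cfn X}$ is a Steenrod morphism $\ns n\to \cfn X$, and $\hat g$ sends it to the composite $\ns n\to \cfn X\xrightarrow{g}\cfn Y$. Theorem~\ref{thm:simplicial-complexes-determined} then identifies source and target with $\ds(X)$ and $\ds(Y)$, and gives canonical homeomorphisms $|X|\cong|\ds(X)|$ and $|Y|\cong|\ds(Y)|$. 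Applying the topological realization functor produces the map $|\hat g|:|X|\to|Y|$.

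For the isomorphism claim, I would argue directly from functoriality. If $g$ is an isomorphism of Steenrod coalgebras with two-sided inverse $g^{-1}$, then $\ss\nfc{*}$ being a functor forces $\widehat{g^{-1}}$ to be a two-sided inverse of $\hat g$ in simplicial sets, so $\hat g$ is an isomorphism of simplicial sets. Geometric realization preserves isomorphisms, so $|\hat g|$ is a homeomorphism; combining this with the homeomorphisms of Theorem~\ref{thm:simplicial-complexes-determined} yields the desired homeomorphism $|X|\to|Y|$.

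For the homology square, the inclusion $j_X:N(X)\hookrightarrow C(\ds(X))$ is induced by $u_X:X\to \ss\nfc{\cfn X}$ from Proposition~\ref{prop:ux-map}, which embeds $X$ as the nondegenerate simplices of $\ds(X)$. Because $\ds(X)$ is obtained from $X$ by formally adjoining only the required degenerate simplices, the normalized chain complex $N(\ds(X))$ equals $N(X)$, and the classical fact that normalization is a quasi-isomorphism (via the Eilenberg--Zilber/Dold--Kan retraction) makes $H_i(j_X)$ an isomorphism; similarly for $Y$. Naturality of $u$ together with the definition of $\hat g$ (precomposition with $u_X$ sends $\Delta^k\subset X$ to the morphism $\cfn\iota$, which $\hat g$ carries to $g\circ \cfn\iota$) gives a commutative square of chain complexes $N(X)\to N(Y)$ vs.\ $C(\ds(X))\to C(\ds(Y))$ at the level of chains, and passing to homology produces the required commutative diagram.

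The most delicate point is verifying the chain-level naturality square, namely that the composite $N(X)\xrightarrow{g} N(Y)\xrightarrow{j_Y} C(\ds(Y))$ agrees with $j_X$ followed by the chain map induced by $\hat g$. This reduces to a generator-by-generator check: for a nondegenerate $k$-simplex $\sigma\subset X$ with inclusion $\iota_\sigma:\Delta^k\to X$, the element $u_X(\sigma)$ is the Steenrod morphism $\cfn{\iota_\sigma}$, and $\hat g$ sends it to $g\circ \cfn{\iota_\sigma}:\ns k\to\cfn Y$; Theorem~\ref{thm:Steenrod-morphisms-geometric} guarantees this composite is itself induced by a simplicial map $\Delta^k\to Y$, so the image in $C(\ds(Y))$ is exactly $j_Y(g(\sigma))$. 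Once this identification is in hand, the homology diagram commutes by applying $H_i$, and all remaining assertions follow.
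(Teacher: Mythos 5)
Your proposal is correct and follows essentially the same route as the paper: functoriality of $\ss\nfc{*}$ produces $\hat g$ by post-composition, Theorem~\ref{thm:simplicial-complexes-determined} identifies the source and target with $\ds(X)$ and $\ds(Y)$ and supplies the homeomorphisms of realizations, and the homology square reduces to the fact that $j_X$ and $j_Y$ are the classical quasi-isomorphic inclusions of the normalized (nondegenerate) subcomplexes. Your generator-by-generator verification of the chain-level naturality square simply makes explicit a step the paper leaves implicit.
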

\begin{proof}
A morphism $g:\cfn X\to\cfn Y$ induces a morphism of simplicial sets
\[
\ss\nfc g:\ss\nfc{\cfn X}\to\ss\nfc{\cfn Y}
\]
which is an isomorphism (and homeomorphism of topological realizations)
if $g$ is an isomorphism. The conclusion follows from theorem~\ref{thm:simplicial-complexes-determined}.
The chain-maps $j_{X}$ and $j_{Y}$ are nothing but the inclusions
of the sub-chain-complexes generated by \emph{nondegenerate} simplices
--- which are well-known to be homology equivalences.
\end{proof}
\appendix

\section{Delta complexes and simplicial sets\label{sec:Delta-complexes-and}}

We consider variations on the concept of simplicial set.
\begin{defn}
\label{def:delta-complexes}Let $\ords$ be the ordinal number category
whose morphisms are order-preserving monomorphisms between them. The
objects of $\ords$ are elements $\mathbf{n}=\{0\to1\to\cdots\to n\}$
and a morphism 
\[
\theta:\mathbf{m}\to\mathbf{n}
\]
 is a strict order-preserving map ($i<k\implies\theta(i)<\theta(j)$).
Then the category of \emph{delta-complexes,} $\dcat$, has objects
that are contravariant functors
\[
\ords\to\mathbf{Set}
\]
to the category of sets. The chain complex of a delta-complex, $X$,
will be denoted $N(X)$.\end{defn}
\begin{rem*}
In other words, delta-complexes are just simplicial sets \emph{without
degeneracies. }Note that ordered simplicial complexes are particular
types of delta-complexes.

A simplicial set gives rise to a delta-complex by ``forgetting''
its degeneracy-operators --- ``promoting'' its degenerate simplices
to nondegenerate status. Conversely, a delta-complex can be converted
into a simplicial set by equipping it with degenerate simplices in
a mechanical fashion. These operations define functors:\end{rem*}
\begin{defn}
\label{def:sd-ds-functors}The functor
\[
\sd:\ss\to\dcat
\]
is defined to simply drop degeneracy operators (degenerate simplices
become nondegenerate). The functor
\[
\ds:\dcat\to\ss
\]
equips a delta complex, $X$, with degenerate simplices and operators
via
\begin{equation}
\ds(X)_{m}=\bigsqcup_{\mathbf{m}\twoheadrightarrow\mathbf{n}}X_{n}\label{eq:ds-functor}
\end{equation}
for all $m>n\ge0$.\end{defn}
\begin{rem*}
The functors $\sd$ and $\ds$ were denoted $F$ and $G$, respectively,
in \cite{rourke-sanderson-delta-complex}. Equation~\ref{eq:ds-functor}
simply states that we add all possible degeneracies of simplices in
$X$ subject \emph{only} to the basic identities that face- and degeneracy-operators
must satisfy. 

Although $\sd$ promotes degenerate simplices to nondegenerate ones,
these new nondegenerate simplices can be collapsed without changing
the homotopy type of the complex: although the degeneracy operators
are no longer built in to the delta-complex, they still define contracting
homotopies.
\end{rem*}
The definition immediately implies that
\begin{prop}
\label{prop:cx-is-nfx}If $X$ is a simplicial set and $Y$ is a delta-complex,
$C(X)=N(\sd(X))$, $N(\ds(Y))=N(Y)$, and $C(X)=N(\ds\circ\sd(X))$.\end{prop}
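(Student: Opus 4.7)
The plan is to verify each of the three equalities directly from the definitions of the functors $\sd$, $\ds$, $C$, and $N$. In every case the claim reduces to matching generators of graded $\ring$-modules in each degree and checking that the boundary operators, both of which are built only out of face-operators, coincide. No homological machinery is required; the content of the proposition is really a careful bookkeeping argument about which simplices are in play at each stage.

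For $C(X) = N(\sd(X))$, I would observe that $\sd$ leaves the underlying set of simplices untouched in each degree, so $\sd(X)_n = X_n$. A delta-complex carries no degeneracy operators, so when $N$ is applied to $\sd(X)$ there is no subcomplex of degenerate simplices to quotient by, and $N(\sd(X))_n$ is the free $\ring$-module on $X_n$, i.e.\ $C(X)_n$. The face-operators are preserved by $\sd$ by construction, so the boundary maps match.

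For $N(\ds(Y)) = N(Y)$, unwinding the definition gives $\ds(Y)_n = \bigsqcup_{\mathbf{n}\twoheadrightarrow\mathbf{k}} Y_k$, and among these summands exactly the one indexed by the identity surjection $\mathbf{n}\to\mathbf{n}$ contributes a canonical copy of $Y_n$. Every other summand arises from $Y_k$ with $k<n$ via a nontrivial composition of codegeneracies, so it consists, by construction, of degenerate simplices of $\ds(Y)$. Passing to the normalized chain-complex kills precisely these added generators, leaving $N(\ds(Y))_n$ freely generated by $Y_n$ with the face-induced boundary; that is $N(Y)_n$. Combining the first two equalities with $Y=\sd(X)$ yields $C(X) = N(\sd(X)) = N(\ds(\sd(X)))$, giving the third.

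The only mildly subtle point, and the closest thing to an obstacle, is being explicit that $N$ applied to a delta-complex means the free chain-complex on all its simplices (there being no degeneracy relations to impose), and conversely that the ``added'' summands appearing in $\ds(Y)_n$ for nonidentity surjections are formally distinct generators that are identified as degenerate simplices of the simplicial set $\ds(Y)$. Once those bookkeeping conventions are pinned down, the proposition is essentially an identification of indexing sets.
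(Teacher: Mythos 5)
Your verification is correct and is exactly the argument the paper intends: the proposition is stated there without proof as an immediate consequence of the definitions of $\sd$, $\ds$, $C$, and $N$, and your unwinding of the indexing sets (all simplices become nondegenerate under $\sd$; only degenerate simplices are added by $\ds$ and these are killed by normalization) is the bookkeeping that makes ``immediate'' precise. No gaps.
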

\begin{defn}
\label{def:degeneracy-free}A simplicial set, $X$, is defined to
be \emph{degeneracy-free} if 
\[
X=\ds(Y)
\]
 for some delta-complex, $Y$.\end{defn}
\begin{rem*}
Compare definition~1.10 in chapter VII of \cite{goerss-jardine}\footnote{ Their definition has a typo, stating that $\ords$ consists of \emph{surjections}
rather than \emph{injections}.}). In a manner of speaking, $X$ is freely generated by the degeneracy
operators acting on a basis consisting of the simplices of $Y$. Lemma~1.2
in chapter~VII of \cite{goerss-jardine} describes other properties
of degeneracy-free simplicial sets (hence of the functor $\ds$).
\end{rem*}
In \cite{rourke-sanderson-delta-complex}, Rourke and Sanderson also
showed that one could give a ``somewhat more intrinsic'' definition
of degeneracy-freeness:
\begin{prop}
\label{prop:intrinsic-degenracy-free}If $X$ is a simplicial set,
let $\mathrm{Core}(X)$ consist of the nondegenerate simplices and
their faces. This is a delta-complex and there exists a canonical
map
\[
c:\ds(\mathrm{Core}(X))\to X
\]
sending simplices of $\mathrm{Core}(X)$ to themselves in $X$ and
degeneracies to suitable degeneracies of them. Then $X$ is degeneracy-free
if and only if $c$ is an isomorphism.
\end{prop}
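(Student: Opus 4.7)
The plan is to prove the proposition in three stages: first construct the canonical map $c$, then establish the ``only if'' direction by identifying $\mathrm{Core}(\ds(Y))$ with $Y$, and finally observe that the ``if'' direction is essentially tautological.

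First, I would verify that $\mathrm{Core}(X)$ really is a delta-complex: a face of a face of a nondegenerate simplex is still a face of a nondegenerate simplex, so the set of nondegenerate simplices of $X$ together with all their faces is closed under the face operators inherited from $X$, yielding a contravariant functor $\ords\to\mathbf{Set}$. To define $c$, note that by equation~\ref{eq:ds-functor} a typical $m$-simplex of $\ds(\mathrm{Core}(X))$ is a pair $(y,\theta)$ with $\theta:\mathbf{m}\twoheadrightarrow\mathbf{n}$ a surjection in the ordinal number category and $y\in\mathrm{Core}(X)_{n}\subset X_{n}$. Setting
\[
c(y,\theta)=\theta^{*}y\in X_{m},
\]
where $\theta^{*}$ is the iterated degeneracy operator of $X$ corresponding to $\theta$, gives a well-defined simplicial map; compatibility with face and degeneracy operators follows from the epi-mono factorization in the simplex category together with the simplicial identities.

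For the ``only if'' direction, suppose $X=\ds(Y)$ for a delta-complex $Y$. By the definition of $\ds$, each $m$-simplex of $X$ is a pair $(y,\theta:\mathbf{m}\twoheadrightarrow\mathbf{n})$ with $y\in Y_{n}$, and the degeneracy operators act only on the $\theta$-component, so such a simplex is nondegenerate in $X$ precisely when $\theta$ is the identity. The nondegenerate simplices of $X$ therefore correspond naturally to the simplices of $Y$. Moreover, for $y\in Y_{n}$ regarded as $(y,\mathrm{id})\in X_{n}$ and a strict injection $d:\mathbf{k}\hookrightarrow\mathbf{n}$, the epi-mono factorization of $d\circ\mathrm{id}$ is simply $\mathrm{id}\circ d$, so the face $d^{*}(y,\mathrm{id})=(d^{*}y,\mathrm{id})$ is again of the form ``$Y$-simplex paired with identity'', where $d^{*}y$ is the face in the delta-complex $Y$. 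Hence every face of a nondegenerate simplex of $X$ is nondegenerate, $\mathrm{Core}(X)$ is naturally identified with $Y$, and under this identification $c$ becomes the identity map of $\ds(Y)=X$.

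The ``if'' direction is immediate: if $c$ is an isomorphism then $X\cong\ds(\mathrm{Core}(X))$ with $\mathrm{Core}(X)$ a delta-complex, so $X$ is degeneracy-free by definition~\ref{def:degeneracy-free}. The main obstacle lies in the ``only if'' direction, specifically verifying that face operators in $\ds(Y)$ preserve the copy of $Y$ sitting inside $\ds(Y)$ as the pure simplices $(y,\mathrm{id})$; this is a careful but routine analysis of the epi-mono decomposition of $d\circ\mathrm{id}$ in $\mathbf{\Delta}$, and is exactly what ensures $\mathrm{Core}(\ds(Y))$ coincides with $Y$ rather than containing extra degenerate-in-$\ds(Y)$ simplices.
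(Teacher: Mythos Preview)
Your argument is correct. The construction of $c$ via the epi-mono factorization is the standard one, your identification of the nondegenerate simplices of $\ds(Y)$ with the pairs $(y,\mathrm{id})$ is right, and the check that faces of such simplices remain of this form (because $\mathrm{id}\circ d=d$ is already mono) is exactly what pins down $\mathrm{Core}(\ds(Y))\cong Y$. The ``if'' direction is indeed immediate from definition~\ref{def:degeneracy-free}.

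As for comparison with the paper: the paper does not supply its own proof of this proposition. It is stated as a result of Rourke and Sanderson~\cite{rourke-sanderson-delta-complex} and left at that, so there is nothing in the paper to compare your argument against. Your write-up is a clean self-contained verification of what the paper takes on citation.
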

Theorem~1.7 of \cite{rourke-sanderson-delta-complex} shows that
there exists an adjunction:

\begin{equation}
\ds:\dcat\leftrightarrow\ss:\sd\label{eq:ds-sd-adjunction}
\end{equation}
The composite (the \emph{counit} of the adjunction)
\[
\sd\circ\ds:\dcat\to\dcat
\]
maps a delta complex into a much larger one --- that has an infinite
number of (degenerate) simplices added to it. There is a natural inclusion
\begin{equation}
\iota_{X}:X\to\sd\circ\ds(X)\label{eq:unit-of-adjunction}
\end{equation}
 and a natural map (the \emph{unit} of the adjunction)
\begin{equation}
g:\ds\circ\sd(X)\to X\label{eq:ds-sd-unit}
\end{equation}
The functor $g$ sends degenerate simplices of $X$ that had been
``promoted to nondegenerate status'' by $\sd$ to their degenerate
originals --- and the extra degenerates added by $\ds$ to suitable
degeneracies of the simplices of $X$. 

Rourke and Sanderson also prove: 
\begin{prop}
\label{prop:homotopy-equiv-sd-ds}If $X$ is a simplicial set and
$Y$ is a delta-complex then
\begin{enumerate}
\item $|Y|$ and $|\ds Y|$ are homeomorphic,
\item the map $|g|:|\ds\circ\sd(X)|\to|X|$ is a homotopy equivalence, so
that $|\iota_{Y}|:|Y|\to|\sd\circ\ds(Y)|$ is a homotopy equivalence,
\item $\sd:H\ss\to H\dcat$ defines an equivalence of categories, where
$H\ss$ and $H\dcat$ are the homotopy categories, respectively, of
$\ss$ and $\dcat$. The inverse is $\ds:H\dcat\to H\ss$. 
\end{enumerate}
\end{prop}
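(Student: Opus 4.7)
For part~(1), realize both sides as coends. The delta-complex realization $|Y|$ is the coend of $Y$ against $\mathbf{n}\mapsto\Delta^n$ indexed over $\ords$ (coface maps only), while the simplicial-set realization $|\ds Y|$ is the corresponding coend over the full simplicial category $\ordered$. Since the simplices of $\ds Y$ are the formal composites $s\sigma$ with $\sigma\in Y$ and $s$ a surjection of ordinals, the extra degeneracy identifications in $|\ds Y|$ collapse each cell $s\sigma\times\Delta^m$ onto $\sigma\times\Delta^n$. A direct check of CW structures shows the natural map $|Y|\to|\ds Y|$ is a continuous bijection on finite skeleta and hence a homeomorphism.

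For part~(2), first apply part~(1) to identify $|\ds\circ\sd(X)|$ with $|\sd(X)|$; then $|g|$ becomes the natural collapse $|\sd X|\to|X|$ induced by the identity on the underlying set of simplices (with $|X|$ imposing the extra degeneracy identifications). I would proceed by skeletal induction: filter $X$ by the subcomplexes generated by non-degenerate simplices of dimension $\le n$. The key local claim is that over a single non-degenerate simplex $\sigma$ of $X$, the subset of $|\sd X|$ collapsed by $|g|$ onto $\sigma$ consists of $\sigma$ together with a tree of higher-dimensional simplices indexed by surjections $\mathbf{m}\twoheadrightarrow\mathbf{n}$, glued via face operators according to the simplicial identities, and this tree admits a deformation retraction onto $\sigma$ via an explicit homotopy in barycentric coordinates that collapses each formal $s_i\tau$ along the direction identifying the doubled face $d_i(s_i\tau)=d_{i+1}(s_i\tau)=\tau$. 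The inductive step realizes each skeletal inclusion as a pushout $\sqcup\partial\Delta^n\to\sqcup\Delta^n$, lifts to a parallel pushout in $|\sd X|$, and the local equivalences glue to a global one via standard cofibration arguments.

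For part~(3), use the adjunction $\ds\dashv\sd$ with unit $\iota$ and counit $g$. The first assertion of part~(2) gives that $|g|$ is a homotopy equivalence; the triangle identity $g_{\ds Y}\circ\ds(\iota_Y)=1_{\ds Y}$, combined with part~(1)'s identification $|\ds(\iota_Y)|_{\ss}=|\iota_Y|_{\dcat}$, then forces the second assertion that $|\iota_Y|$ is also a homotopy equivalence. Since unit and counit are thus natural weak equivalences on realization, $\ds$ and $\sd$ descend to inverse equivalences $H\ss\leftrightarrow H\dcat$. The main obstacle throughout is the local retraction step of part~(2): iterated degeneracies $s_{i_r}\cdots s_{i_1}\sigma$ attach to one another in an intricate pattern governed by the simplicial identities, and the retraction must be coherent across all face-gluings simultaneously. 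An explicit barycentric construction combined with the Eilenberg--Zilber decomposition, as in the original Rourke--Sanderson argument, should provide this coherent homotopy.
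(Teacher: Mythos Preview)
Your sketch is a reasonable outline of a genuine proof, but you should be aware that the paper itself does not prove this proposition at all: its entire ``proof'' is the single sentence \emph{``The first two statements are proposition~2.1 of \cite{rourke-sanderson-delta-complex}.''} Part~(3) is not even addressed there and is presumably also meant to be covered by the citation (it is the content of Rourke--Sanderson's Theorem~1.7 together with their Proposition~2.1). So you are doing strictly more work than the paper does --- you are in effect reconstructing the Rourke--Sanderson argument rather than reproducing the paper's approach.

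On the substance of your sketch: part~(1) via coends is standard and fine. For part~(2), your skeletal induction with a local retraction of the ``degeneracy tree'' over each nondegenerate simplex is indeed the heart of the matter, and you correctly flag it as the main obstacle. The Rourke--Sanderson argument handles this by constructing an explicit simplicial collapse (their ``trisps'' admit a normal subdivision on which the map becomes a simplicial collapse), which is somewhat different in flavor from the barycentric-coordinate homotopy you propose, but either route works. Your part~(3) deduction from the triangle identities is clean and correct; note incidentally that the paper's parenthetical labels ``unit'' and ``counit'' just above the proposition are swapped relative to the standard convention for $\ds\dashv\sd$ --- your labeling ($\iota$ as unit, $g$ as counit) is the right one.
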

\begin{rem*}
Here, $|*|$ denotes the topological realization functors for $\ss$
and $\dcat$.\end{rem*}
\begin{proof}
The first two statements are proposition~2.1 of \cite{rourke-sanderson-delta-complex}. 
\end{proof}
\bibliographystyle{amsplain}

\begin{thebibliography}{1}

\bibitem{goerss-jardine}
Paul~G. Goerss and John~F. Jardine, \emph{Simplicial {H}omotopy {T}heory},
  Progress in Mathematics, vol. 174, Birkh{\"a}user, Boston, 1999.

\bibitem{gugenheim:1960}
V.~K. A.~M. Gugenheim, \emph{On a theorem of {E.} {H.} {B}rown}, Illinois J. of
  Math. \textbf{4} (1960), 292--311.

\bibitem{maclane:1975}
S.~MacLane, \emph{Homology}, Springer-Verlag, 1995.

\bibitem{operad-book}
Martin Markl, Steve Shnider, and Jim Stasheff, \emph{Operads in {A}lgebra,
  {T}opology and {P}hysics}, Mathematical Surveys and Monographs, vol.~96,
  American Mathematical Society, May 2002.

\bibitem{rourke-sanderson-delta-complex}
C.~P. Rourke and B.~J. Sanderson, \emph{{$\Delta$}-sets. {I}. {H}omotopy
  theory}, Quart. J. Math. Oxford \textbf{22} (1971), 321--338.

\bibitem{smith-steenrod}
Justin~R. Smith, \emph{Steenrod coalgebras}, arXiv:1401.3618, to appear in
  Topology and its Applications: doi:10.1016/j.topol.2015.02.009.

\bibitem{smith:1994}
\bysame, \emph{Iterating the cobar construction}, vol. 109, Memoirs of the A.
  M. S., no. 524, American Mathematical Society, Providence, Rhode Island, May
  1994.

\end{thebibliography}

\providecommand{\bysame}{\leavevmode\hbox to3em{\hrulefill}\thinspace}
\providecommand{\MR}{\relax\ifhmode\unskip\space\fi MR }
\providecommand{\MRhref}[2]{%
  \href{http://www.ams.org/mathscinet-getitem?mr=#1}{#2}
}
\providecommand{\href}[2]{#2}


    \end{document}